\documentclass{amsart}
\usepackage{amsmath, amssymb, amsthm, stmaryrd, tikz, pgfplots}
\pgfplotsset{compat=1.15}
\usepackage[T1]{fontenc}
\usepackage[all]{xy}
\usepackage[colorlinks=true]{hyperref}

\theoremstyle{plain}
\newtheorem{theorem}{Theorem}[section]
\newtheorem{proposition}{Proposition}[section]
\newtheorem{corollary}{Corollary}[section]
\newtheorem{lemma}{Lemma}[section]

\theoremstyle{remark}
\newtheorem{remark}{Remark}[section]

\DeclareMathOperator{\R}{\mathbb{R}}

\DeclareMathOperator{\Z}{\mathbb{Z}}
\DeclareMathOperator{\Trace}{Trace}
\DeclareMathOperator{\diag}{diag}

\DeclareMathOperator{\Sym}{Sym}

\DeclareMathOperator{\mcalL}{\mathcal{L}}
\DeclareMathOperator{\mcalM}{\mathcal{M}}

\DeclareMathOperator{\mcalR}{\mathcal{R}}
\DeclareMathOperator{\mcalS}{\mathcal{S}}

\title[Quadratic ineq.\ between the largest eigenvalues of a graph]{Quadratic inequalities between the largest eigenvalues of a graph}

\author{Roland Paulin}
\address{HUN-REN Alfr\'ed R\'enyi Institute of Mathematics, H-1053 Budapest, Re\'altanoda utca 13-15}
\email{paulinroland@gmail.com}
\thanks{The author was supported by the Dynasnet ERC Synergy project (ERC-2018-SYG 810115) and the Hungarian National Research, Development and Innovation Office (Advanced grant 15337)}

\subjclass[2020]{05C35, 05C50, 15A18, 15A42}
\keywords{graph eigenvalues, matrix eigenvalues}
\date{\today}

\begin{document}

\begin{abstract}
We prove a sharp quadratic inequality between the largest two eigenvalues $\lambda_1 \ge \lambda_2$ of a graph with $n$ vertices.
We also prove a quadratic inequality between the second and third largest eigenvalues $\lambda_2 \ge \lambda_3$.
These results in particular imply the bounds $\lambda_1 + \lambda_2 \le \frac{8}{7} n - 2$, $\lambda_3 \le \frac{n}{3} - 1$ and $\lambda_2 + \lambda_3 \le \frac{2}{3} n - 2$.
In fact we determine the closure of the set of possible $(\frac{\lambda_1+1}{n}, \frac{\lambda_2+1}{n}) \in \mathbb{R}^2$ and $(\frac{\lambda_2+1}{n}, \frac{\lambda_3+1}{n}) \in \mathbb{R}^2$.
More generally, we prove quadratic bounds in the case of symmetric matrices in $[0,1]^{n \times n}$, and we also give a quadratic bound for two eigenvalues of a symmetric matrix in $[-1,1]^{n \times n}$.
These bounds are proved by transforming the problem into extremal geometric questions in $\mathbb{R}^3$ and $\mathbb{R}^2$.
We use the method of Lagrange multipliers to reduce to special cases with at most five points, and we deal with these special cases directly.
\end{abstract}

\maketitle

\section{Introduction}

In this article we study the first, second and third largest eigenvalues of the adjacency matrix $A_G$ of a graph $G$, and more generally, of a symmetric matrix $A \in [0,1]^{n \times n}$.
Let $\lambda_1 \ge \lambda_2 \ge \dotsc \ge \lambda_n$ be the eigenvalues of the adjacency matrix $A_G$, where $G$ is a finite, simple graph with $n \ge 2$ vertices, and let $s_i = \frac{\lambda_i+1}{n}$.
It is known that $s_1 \le 1$ and $0 \le s_2 \le \frac{1}{2}$ (see \cite{Hong1988}), and recently $s_3 \le \frac{1}{3}$ has been proved by Tang \cite{tang2026sharpupperboundadjacency} (also see \cite{sivashankar2026grapheigenvaluesprojectionconstants}), solving a problem of Nikiforov \cite{Nikiforov2015}.
Recently Kumar, Liu, Monterde, Pragada and Tait \cite{kumar2026maximumspectralsumgraphs} proved $\lambda_1 + \lambda_2 \le \frac{8}{7} n$, solving a conjecture of Ebrahimi~B, Mohar, Nikiforov and Ahmady \cite{EMNA}, and Huang and Wei \cite{huang2026exactmaximumspectralsum} improved this to $s_1+s_2 \le \frac{8}{7}$.

We study the possible values of $(s_1, s_2) \in \R^2$ and $(s_2, s_3) \in \R^2$.
We prove that $s_1^2 + s_1 s_2 + 2 s_2^2 \le s_1 + s_2$ and $2 (s_2^2 + s_2 s_3 + s_3^2) \le s_2+s_3$, and we prove that these bounds are sharp in a sense (Corollary \ref{corA}).
As a consequence we get $s_1+s_2 \le \frac{8}{7}$, $s_2 + s_3 \le \frac{2}{3}$ and $s_3 \le \frac{1}{3}$.
We prove quadratic inequalities not just for graphs, but more generally, for symmetric matrices $A \in [0,1]^{n \times n}$ (Theorem \ref{thmA}).
We also prove a quadratic bound for two eigenvalues of a symmetric matrix $B \in [-1,1]^{n \times n}$ (Theorem \ref{thmB}).
We prove these results as a consequence of extremal geometric statements in $\R^3$ and in $\R^2$ (Propositions \ref{propA12}, \ref{propA23}, \ref{propB} and \ref{propB2}).
In the last extremal geometric problem (in the proof of Proposition \ref{propB2}), we use the method of Lagrange multipliers, and reduce the problem to the case of at most $5$ points (with further restrictions as well), and we deal with these special cases directly.

We use the notation $[n] = \{1, \dotsc, n\}$ for $n \in \Z_{\ge 0}$. 
For symmetric square matrices $A \succeq B$ means $A-B$ is positive semidefinite, and $A \succ B$ means $A-B$ is positive definite.
If $G$ is a finite simple graph with vertex set $[n]$, then $A_G$ denotes its adjacency matrix.
The characteristic polynomial of a matrix $M \in \R^{n \times n}$ is $\chi_M(t) = \det(t I_n - M)$.
Let $\mcalR_{1,2}$ denote the set of $(s_1, s_2) \in \R^2$ such that there is a sequence $(G_i)_{i \ge 1}$ of finite, simple graphs, where $G_i$ has $n_i$ vertices and eigenvalues $\lambda_{i,1} \ge \dotsc \ge \lambda_{i,n_i}$, and $n_i \to \infty$ and $\frac{\lambda_{i,a}+1}{n_i} \to s_a$ as $i \to \infty$ for $a \in \{1,2\}$.
Similarly, let $\mcalR_{2,3}$ denote the set of $(s_2, s_3) \in \R^2$ such that there is a sequence $(G_i)_{i \ge 1}$ of finite, simple graphs, where $G_i$ has $n_i$ vertices and eigenvalues $\lambda_{i,1} \ge \dotsc \ge \lambda_{i,n_i}$, and $n_i \to \infty$ and $\frac{\lambda_{i,a}+1}{n_i} \to s_a$ as $i \to \infty$ for $a \in \{2,3\}$.
Let $\mcalS_{1,2}$ and $\mcalS_{2,3}$ be slight modifications of $\mcalR_{1,2}$ and $\mcalR_{2,3}$, where we do not require $n_i \to \infty$ in the definition.
So $\mcalS_{1,2}$ is the closure of the set $\{(\frac{\lambda_1(G)+1}{|V(G)|}, \frac{\lambda_2(G)+1}{|V(G)|}); \, |V(G)| \ge 2\} \subseteq \R^2$ and $\mcalS_{2,3}$ is the closure of the set $\{(\frac{\lambda_2(G)+1}{|V(G)|}, \frac{\lambda_3(G)+1}{|V(G)|}); \, |V(G)| \ge 3\} \subseteq \R^2$.
Clearly $\mcalR_{1,2} \subseteq \mcalS_{1,2}$ and $\mcalR_{2,3} \subseteq \mcalS_{2,3}$.
In Corollary \ref{corA} we give exact descriptions for $\mcalR_{1,2}$, $\mcalR_{2,3}$, $\mcalS_{1,2}$, $\mcalS_{2,3}$.

A diagram of the structure of the proofs in this article:
\[\xymatrix{
& \textrm{Cor.\ \ref{corM}} & & \\
\textrm{Prop.\ \ref{propA12}} \ar@{=>}[dr] \ar@{=>}[ru] & \textrm{Prop.\ \ref{propA23}} \ar@{=>}[l]  \ar@{=>}[d] \ar@{=>}[u] & \textrm{Prop.\ \ref{propB}} \ar@{=>}[l] \ar@{=>}[d] \ar@{=>}[lu] & \textrm{Prop.\ \ref{propB2}} \ar@{=>}[l] \\
& \textrm{Thm.\ \ref{thmA}} \ar@{=>}[d] & \textrm{Thm.\ \ref{thmB}} & \\
& \textrm{Cor.\ \ref{corA}} & &
}\]

\section{Statement of the results}

The following theorem gives a quadratic inequality between the first and second largest eigenvalues, and an inequality between the second and third largest eigenvalues of a symmetric matrix in $[0,1]^{n \times n}$.
\begin{theorem} \label{thmA}
Let $n \ge 2$, and let $A \in [0,1]^{n \times n}$ be a symmetric matrix with eigenvalues $\lambda_1 \ge \lambda_2 \ge \dotsc \ge \lambda_n$.
Let $s_i = \frac{\lambda_i}{n}$.
Then $s_1 \in [0,1]$ and $s_2 \le \frac{1}{2}$.
If $s_2 \ge 0$, then
\begin{equation}
s_1^2 + s_1 s_2 + 2 s_2^2 \le s_1 + s_2.
\end{equation}
If $n \ge 3$ and $s_3 \ge 0$, then
\begin{equation}
2 (s_2^2 + s_2 s_3 + s_3^2) \le s_2+s_3.
\end{equation}
\end{theorem}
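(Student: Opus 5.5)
Every inequality in the statement is homogeneous of degree two in $(\lambda_1,\lambda_2,\lambda_3)$ once we multiply through by $n^2$, so the strategy is to turn the eigenvalue problem into a statement about a few vectors in $\R^n$ and then into an extremal geometric question in low dimension. The diagram says Theorem~\ref{thmA} is deduced from Propositions~\ref{propA12} and~\ref{propA23}, so the plan is to reduce to them.

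The easy bounds come first. Since $A\ge 0$ entrywise, Perron--Frobenius gives $\lambda_1\ge 0$. Also $\lambda_1\le \max_i\sum_j A_{ij}\le n$, so $s_1\in[0,1]$. For $s_2\le \frac12$, take orthonormal eigenvectors $u,v$ for $\lambda_1,\lambda_2$. We have $\lambda_1+\lambda_2=\max\Trace(P A)$ over rank-two projections $P$. Using $A\le J$ entrywise and $\lambda_1\ge\lambda_2$, this gives $2\lambda_2\le\lambda_1+\lambda_2\le u^\top Au+v^\top Av$, and it remains to bound the right-hand side by $n$. One can also obtain this directly as a degenerate case of the quadratic inequality: with $s_1\le1$, the inequality $s_1^2+s_1s_2+2s_2^2\le s_1+s_2$ already forces $s_2\le\frac12$ (maximize over $s_1$). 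So I will derive it from the quadratic bound where possible.

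For the main inequalities, write $A=\sum_k\lambda_k w_kw_k^\top$ and, for two chosen eigenvalues $\lambda_a,\lambda_b\ge 0$ with orthonormal eigenvectors $x,y$, use the entrywise constraints $0\le A_{ij}\le 1$ tested against $x,y$. Concretely, $\lambda_a=x^\top Ax$, $\lambda_b=y^\top Ay$ and $x^\top Ay=0$. The constraint $A\le J$ gives $\lambda_a x_i^2$-type bounds. I would assign to each index $i$ the point $p_i=(x_i,y_i)\in\R^2$, or a point in $\R^3$ if the sign information from $A\ge 0$ must be encoded, say through $\sqrt{\lambda_1}$-weighted coordinates. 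The key relations are then:
\begin{itemize}
\item the positive semidefinite part $\lambda_a xx^\top+\lambda_b yy^\top\preceq A+(\text{negative part})$;
\item entrywise bounds $A_{ij}\in[0,1]$.
\end{itemize}
Contracting against suitable test matrices yields an inequality of the form $\lambda_a^2+\lambda_b^2+\dots\le\sum_{i,j}f(p_i,p_j)$, where $f$ is a piecewise bilinear kernel (positive part of an inner product). Normalizing by $n$ turns this into exactly the extremal problems of Propositions~\ref{propA12} and~\ref{propA23}: a bound on a functional of a finite point configuration of unit total mass. I expect these propositions, stated earlier, to assert precisely $s_1^2+s_1s_2+2s_2^2\le s_1+s_2$ and $2(s_2^2+s_2s_3+s_3^2)\le s_2+s_3$ for the derived quantities, so the proof of Theorem~\ref{thmA} is to verify that the quantities defined from $A$ satisfy the hypotheses of the propositions.

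For $(\lambda_2,\lambda_3)$, the matrix is not positive in the relevant eigenspace. The natural move is to pass to $B=2A-J\in[-1,1]^{n\times n}$: by interlacing for the rank-one perturbation $J$, $\lambda_{k+1}(2A)\le\lambda_k(B)+\dots$. This is presumably how Proposition~\ref{propA23} follows from Proposition~\ref{propB}. In the theorem itself, I only need to check that the hypotheses $s_2\ge0$ and $s_3\ge0$ make the eigenvector normalizations valid, and apply the proposition.

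The main obstacle is the translation step: choosing the right test vectors and the right point configuration so that the eigenvalue data land exactly on the hypotheses of the earlier propositions, with the signs $s_2,s_3\ge0$ used correctly. The first thing I would try is Ky Fan's principle with $x,y$ the eigenvectors, together with the bound $\sum_{ij}A_{ij}(\cdot)\le\sum_{ij}(\cdot)_+$.
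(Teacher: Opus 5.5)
\textbf{Gap: the reduction to Propositions \ref{propA12} and \ref{propA23} is never carried out.} You correctly guess that Theorem \ref{thmA} follows from these propositions, but that reduction is the whole proof. The mechanism you sketch for it (test matrices, positive/negative parts of $A$, Ky Fan, $\sum_{i,j}A_{i,j}(\cdot)\le\sum_{i,j}(\cdot)_+$) is not what is needed. The propositions already take as input a weighted configuration with $\sum_i p_i=1$, $\sum_i p_iz_iz_i^T\preceq I$, and they bound the eigenvalues of $\Delta=\sum_{i,j}A_{i,j}p_ip_jz_iz_j^T$. So no inequality has to be derived here, only an identity.

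Take orthonormal eigenvectors $u_1,u_2,u_3$ of $A$ for $\lambda_1,\lambda_2,\lambda_3$, and set $p_i=\frac1n$, $z_i=\sqrt n\,((u_1)_i,(u_2)_i)\in\R^2$. Then $\sum_ip_iz_iz_i^T=(u_a^Tu_b)_{a,b\in[2]}=I_2$ and $\Delta=\frac1n(u_a^TAu_b)_{a,b\in[2]}=\diag(s_1,s_2)$. Proposition \ref{propA12} then gives $s_1\le 1$, $s_2\le\frac12$, and the first quadratic inequality when $s_2\ge 0$. With $z_i=\sqrt n\,((u_1)_i,(u_2)_i,(u_3)_i)\in\R^3$ you get $\Delta=\diag(s_1,s_2,s_3)$, and Proposition \ref{propA23} gives the second inequality.

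The dimension $3$ is needed only because Proposition \ref{propA23} concerns the second and third eigenvalues of a $3\times 3$ matrix. It is not there to encode signs through $\sqrt{\lambda_1}$-weighted coordinates. Likewise, $s_2\ge0$ and $s_3\ge0$ play no role in the normalization, since orthonormal eigenvectors exist for any real symmetric matrix; they are simply passed on as the sign hypotheses of the propositions.

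Your remark on $2A-J$ belongs to the paper's proof of Proposition \ref{propA23} from Proposition \ref{propB}. Pushed through at the matrix level it would in fact give the second inequality directly: Weyl's inequality gives $\lambda_{k+1}(2A)\le\lambda_k(2A-J)$, and you can then apply Theorem \ref{thmB} and convexity of the ellipse. But you set this aside, and you offer nothing comparable for the first inequality.

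\textbf{Your first route to $s_2\le\frac12$ fails.} Since $u,v$ are eigenvectors, $u^TAu+v^TAv=\lambda_1+\lambda_2$ exactly. ``Bounding the right-hand side by $n$'' would therefore mean $\lambda_1+\lambda_2\le n$, which is false: for $A=I_3+A_{P_3}$ one has $\lambda_1+\lambda_2=2+\sqrt2>3$, and in general $s_1+s_2$ can approach $\frac87$.

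The fallback via the quadratic inequality works, but it needs $s_2\le s_1$ rather than $s_1\le1$; the ellipse alone allows $s_2$ up to $\frac{1+2\sqrt2}{7}>\frac12$. If $s_1\ge s_2>\frac12$, then $s_1(s_1+s_2-1)+s_2(2s_2-1)>0$, which contradicts the inequality. If $s_2<0$ there is nothing to show. Your arguments for $s_1\in[0,1]$ (Perron--Frobenius and the row-sum bound) are fine.
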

\begin{remark}
The construction in the proof of Corollary \ref{corA} shows that the bounds here are sharp in some sense (assuming $s_2 \ge 0$ in the first inequality, and $s_3 \ge 0$ in the second one).
The inequality $s_1^2 + s_1 s_2 + 2 s_2^2 \le s_1 + s_2$ can fail if $s_2 < 0$, e.g., for $A = (\begin{smallmatrix} 0 & 1 \\ 1 & 0 \end{smallmatrix})$ we get $(s_1, s_2) = (\frac{1}{2}, -\frac{1}{2})$.
The inequality $2 (s_2^2 + s_2 s_3 + s_3^2) \le s_2+s_3$ can fail if $s_3 < 0$, e.g., for $A = \left(\begin{smallmatrix} 0 & 1 & 0 \\ 1 & 0 & 1 \\ 0 & 1 & 0 \end{smallmatrix}\right)$ we get $(s_1, s_2, s_3) = (\frac{\sqrt{2}}{3}, 0, -\frac{\sqrt{2}}{3})$.
\end{remark}

Figures \ref{fig:s1s2} and \ref{fig:s2s3} show the regions of $(s_1, s_2)$ and $(s_2, s_3)$ defined by the above quadratic inequalities, with the requirements $s_2 \ge 0$ and $s_3 \ge 0$.
As a consequence we get that $s_1 + s_2 \le \frac{8}{7}$, and $s_2+s_3 \le \frac{2}{3}$ and $s_3 \le \frac{1}{3}$ for $n \ge 3$.
The condition $s_2 \ge 0$ is automatically satisfied if $A_{i,i} = 1$ for every $i \in [n]$ (e.g., if $A = I_n + A_G$ for a finite simple graph $G$), because then $1 = \frac{1}{n} \Trace(A) = \sum_{i=1}^n s_i \le s_1 + (n-1) s_2$ and $s_1 \le 1$.

\begin{corollary} \label{corA}
Let $G$ be a finite simple graph with $n \ge 2$ vertices, and let $\lambda_1 \ge \lambda_2 \ge \dotsc \ge \lambda_n$ be the eigenvalues of $A_G$.
Let $s_i = \frac{\lambda_i+1}{n}$.
Then $s_1 \in [0,1]$, $s_2 \in [0,\frac{1}{2}]$,
\[
s_1^2 + s_1 s_2 + 2 s_2^2 \le s_1+s_2,
\]
and if $n \ge 3$, then
\[
2 (s_2^2 + s_2 s_3 + s_3^2) \le s_2+s_3.
\]
If $n \ge 3$ and $G$ is not the path graph with $3$ vertices, then $s_3 \ge 0$.
The quadratic inequalities are sharp in the following sense:
\begin{align*}
\mcalS_{1,2} = \mcalR_{1,2} &= \{(s_1, s_2) \in \R^2; \, 0 \le s_2 \le s_1 \textrm{ and } s_1^2 + s_1 s_2 + 2 s_2^2 \le s_1+s_2 \}, \\
\mcalR_{2,3} &= \{(s_2, s_3) \in \R^2; \, 0 \le s_3 \le s_2 \textrm{ and } 2 (s_2^2 + s_2 s_3 + s_3^2) \le s_2+s_3 \}
\end{align*}
and
\[
\mcalS_{2,3} = \mcalR_{2,3} \cup \left\{\left(\frac{1}{3}, \frac{1-\sqrt{2}}{3}\right)\right\}.
\]
\end{corollary}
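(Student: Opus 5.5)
Apply Theorem \ref{thmA} to $A = I_n + A_G$, which is a symmetric matrix in $[0,1]^{n\times n}$ with eigenvalues $\lambda_i+1$. Its normalized eigenvalues are exactly the $s_i=\frac{\lambda_i+1}{n}$ of the corollary, so Theorem \ref{thmA} gives $s_1\in[0,1]$ and $s_2\le \frac12$.

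\textbf{The inequalities.}
\begin{itemize}
\item Since all diagonal entries are $1$, the remark after Theorem \ref{thmA} gives $1=\sum_i s_i\le s_1+(n-1)s_2$ with $s_1\le1$, hence $s_2\ge0$. The first quadratic inequality then follows from the theorem.
\item For $s_3\ge0$ with $n\ge3$ and $G\neq P_3$, I would argue as follows. If $\lambda_3<-1$, then by interlacing every induced subgraph on $3$ vertices has $\lambda_3<-1$. A graph on three vertices has $\lambda_3\ge -1$ unless it is $P_3$ (which has $-\sqrt2$). Also, $K_1\cup K_2$ has eigenvalues $1,0,-1$, the empty graph $0$, and $K_3$ has $2,-1,-1$. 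So every induced $3$-vertex subgraph is $P_3$.
\item If $n\ge4$, take an induced $P_3$, say $a-b-c$, and a fourth vertex $d$. Then $\{a,c,d\}$ must be a $P_3$, so $d$ is adjacent to both $a$ and $c$. Now $\{a,b,d\}$ must also be a $P_3$; since $a\sim b$ and $a\sim d$, this forces $b\not\sim d$. Then $\{b,c,d\}$ has edges $bc$ and $cd$ only, which is a $P_3$, so there is no contradiction yet.
\item I would therefore redo this via interlacing with $4$-vertex subgraphs: $\lambda_3(G)\le\lambda_3(H)$ for an induced $H$ on $4$ vertices. Every $3$-subset being $P_3$ forces $H=C_4$, whose spectrum is $2,0,0,-2$ with $\lambda_3=0>-1$. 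This is a contradiction.
\item So $s_3\ge0$, and the second inequality follows from the theorem.
\end{itemize}

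\textbf{Sharpness.} The inclusions $\mcalR\subseteq\mcalS\subseteq$ (regions) follow from the inequalities, closedness, and $s_2\le s_1$. For $\mcalS_{2,3}$ the only extra contribution comes from $n=3$, $G=P_3$, giving the point $(\frac13,\frac{1-\sqrt2}{3})$. Since $P_3$ is the only exceptional graph, only that one point is added.

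For the reverse inclusion I need constructions whose limits fill the regions.
\begin{itemize}
\item For the \emph{boundary curves}, I would use blow-ups of small weighted graphs: cliques with prescribed part sizes $x_in$, joined completely or not at all. The normalized spectrum of $I+A$ then tends to the spectrum of $D^{1/2}MD^{1/2}$, where $M$ is a $0/1$ pattern matrix with ones on the diagonal and $D=\diag(x_i)$.
\item For $(s_1,s_2)$, a natural candidate is a clique $K_{an}$ together with two cliques $K_{bn},K_{bn}$, both joined to the first but not to each other, plus isolated vertices. Varying the parameters should trace the ellipse $s_1^2+s_1s_2+2s_2^2=s_1+s_2$.
\item For $(s_2,s_3)$, I would use three cliques of size $bn$ joined appropriately, or three disjoint structures, to trace $2(s_2^2+s_2s_3+s_3^2)=s_2+s_3$.
\item The \emph{interior} is filled by adding isolated vertices, which scales $(s_i)$ toward $0$, and by taking disjoint unions, which interpolate.
\end{itemize}

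The main obstacle I expect is finding explicit extremal families that realize the whole boundary arcs, matching the equality cases of Theorem \ref{thmA}, and then checking that scaling and unions fill the region, including the segments on $s_2=s_1$ and $s_2=0$.
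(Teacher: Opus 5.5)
You prove the inequalities essentially as the paper does. Your treatment of $n\ge4$ (the $3$-vertex condition forces an induced $C_4$, and interlacing gives $\lambda_3(G)\ge\lambda_3(C_4)=0$) is a clean uniform substitute for the paper's case split on $n$.

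There is one small omission. Theorem~\ref{thmA} gives the second inequality only when $s_3\ge0$, so for $G=P_3$ you must check it directly. With $(s_2,s_3)=(\frac13,\frac{1-\sqrt2}{3})$ one has $2(s_2^2+s_2s_3+s_3^2)=\frac{2(5-3\sqrt2)}{9}\approx0.168\le\frac{2-\sqrt2}{3}\approx0.195$.

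The genuine gap is the sharpness half, i.e.\ the reverse inclusions, which you leave as an anticipated obstacle. For $(s_2,s_3)$ your suggested family cannot work. Let $G$ be three cliques with any complete/empty join pattern, plus isolated vertices. If the cliques are pairwise disjoint, the top eigenvalues of $I+A_G$ normalise to $x_1,x_2,x_3$ with $x_1+x_2+x_3\le1$. Hence $2s_2+s_3\le1$, which puts $(s_2,s_3)$ on or below the chord from $(\frac12,0)$ to $(\frac13,\frac13)$, strictly inside the ellipse except at its endpoints. Otherwise the block of $I+A_G$ on the three cliques has at most two positive eigenvalues, and isolated vertices only contribute eigenvalue $1$. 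So $\lambda_3(I+A_G)\le1$ and $s_3\to0$. Either way the open arc is never reached.

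The missing idea is to place the $(s_1,s_2)$-extremal graph beneath a larger block. Take $G_{n',n''}$ (two disjoint $K_{n'}$ joined to $K_{n''}$, which is exactly your $(s_1,s_2)$ candidate). Add a disjoint clique $K_{n'''}$ with $n'''=\lceil\mu\rceil$, where $\mu$ is the largest eigenvalue of $I+A_{G_{n',n''}}$. The clique supplies the top eigenvalue, so $(s_2,s_3)=(\mu/n,\,n'/n)$. Write $x=n''/n$, $y=n'/n$, $m=\mu/n$. In the limit $x+2y+m=1$ and $m^2-(x+y)m-xy=0$. Eliminating $x$ gives exactly $2(m^2+my+y^2)=m+y$, and the arc from $(\frac12,0)$ to $(\frac13,\frac13)$ is swept as $n'/n''$ runs over $(0,\infty)$. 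This is the reduction Proposition~\ref{propA23} $\Rightarrow$ Proposition~\ref{propA12} run backwards: adjoin a block whose eigenvalue equals $s_1$.

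For $(s_1,s_2)$ you still owe the computation. $I+A_{G_{n',n''}}$ has nonzero eigenvalues $n'$ and the two roots of $t^2-(n'+n'')t-n'n''$. With $N=2n'+n''$, take $s_2=n'/N$ and $s_1$ the larger root divided by $N$. Then $s_1^2-(1-s_2)s_1-(1-2s_2)s_2=0$, i.e.\ $s_1^2+s_1s_2+2s_2^2=s_1+s_2$, sweeping from $(1,0)$ to $(\frac12,\frac12)$.

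Finally, disjoint unions do not interpolate linearly and are not needed. Adding isolated vertices scales the pair by any $c\in[0,1]$ in the limit. Both ellipses are tangent to $s_a+s_b=0$ at the origin, so every ray from the origin in the wedge meets the arc exactly once. Hence each region is the union of segments from the origin to its arc.
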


We deduce the quadratic inequality for $s_1, s_2$ in Theorem \ref{thmA} from the following $2$-dimensional geometric statement.
\begin{proposition} \label{propA12}
Let $p_1, \dotsc, p_n \in \R_{\ge 0}$, $z_1, \dotsc, z_n \in \R^2$ such that $\sum_i p_i = 1$ and $\sum_i p_i z_i z_i^T \preceq I_2$.
Let $A \in [0,1]^{n \times n}$ be symmetric, and let
\[
\Delta = \sum_{i,j} A_{i,j} p_i p_j z_i z_j^T \in \R^{2 \times 2}.
\]
Let $s_1 \ge s_2$ be the eigenvalues of $\Delta$.
Then $s_1 \le 1$ and $s_2 \le \frac{1}{2}$, and if $s_2 \ge 0$, then
\[
s_1^2 + s_1 s_2 + 2 s_2^2 \le s_1 + s_2.
\]
\end{proposition}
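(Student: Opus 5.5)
The plan is to diagonalize $\Delta$ and reduce to an extremal problem about a probability measure in the plane. Put $w_i = p_i z_i$, so that $\Delta = \sum_{i,j} A_{i,j} w_i w_j^T$. Let $u_1, u_2$ be orthonormal eigenvectors of $\Delta$ for $s_1 \ge s_2$. For a unit vector $u_\theta = \cos\theta\, u_1 + \sin\theta\, u_2$,
\[
u_\theta^T \Delta u_\theta = s_1 \cos^2\theta + s_2 \sin^2\theta = \sum_{i,j} A_{i,j} p_i p_j a_i a_j , \qquad a_i = u_\theta \cdot z_i .
\]
Since $A_{i,j} \in [0,1]$, the terms with $a_i a_j < 0$ can be dropped, which gives
\[
s_1 \cos^2\theta + s_2 \sin^2\theta \le P_\theta^2 + N_\theta^2,
\]
where $P_\theta = \sum_i p_i (a_i)_+$ and $N_\theta = \sum_i p_i (a_i)_-$. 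By Cauchy--Schwarz and $\sum_i p_i z_i z_i^T \preceq I_2$, we have $P_\theta^2 + N_\theta^2 \le (P_\theta + N_\theta)^2 \le \sum_i p_i a_i^2 \le 1$. Taking $\theta = 0$ gives $s_1 \le 1$. A $2 \times 2$ minor argument applied to $\Delta$ should give $s_2 \le \tfrac12$. For that I would compare $\tfrac12(s_1+s_2)$, the average of $u_\theta^T\Delta u_\theta$ over $\theta$, with the corresponding average of $P_\theta^2 + N_\theta^2$.

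This reduces the statement to a purely geometric problem. Let $\mu = \sum_i p_i \delta_{z_i}$ be a probability measure on $\R^2$ with second-moment matrix $\preceq I_2$, and set $g(\theta) = \bigl(E_\mu (u_\theta\cdot z)_+\bigr)^2 + \bigl(E_\mu (u_\theta\cdot z)_-\bigr)^2$. The claim to prove is: if $s_1 \ge s_2 \ge 0$ and $s_1\cos^2\theta + s_2\sin^2\theta \le g(\theta)$ for all $\theta$, then $s_1^2 + s_1 s_2 + 2 s_2^2 \le s_1 + s_2$. In practice I would use only finitely many directions. Since we have freedom in $\theta$, a natural choice is the three directions $\theta$, $\theta + \pi/3$, $\theta + 2\pi/3$, together with $\theta = 0$ and $\theta = \pi/2$. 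The constraint $\sum_i p_i z_i z_i^T \preceq I_2$ would then be used through its trace and its value in the directions $u_1, u_2$.

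The expected extremizers, read off from the boundary of the region, are
\begin{itemize}
\item $\mu$ concentrated on a single line (this gives $s_2 = 0$, $s_1 = 1$);
\item $\mu$ uniform on a configuration like three or four symmetric points, with $A$ the indicator of pairs with acute angle (this gives $s_1 = s_2 = \tfrac12$).
\end{itemize}
The intermediate boundary curve should be realized by a one-parameter deformation of these. Writing these configurations down explicitly will both confirm sharpness and suggest which directions $\theta$ give the certificate.

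The main obstacle is proving the geometric inequality itself. My first approach would be variational. Fix the direction data and maximize $s_1 + s_2 - s_1^2 - s_1 s_2 - 2s_2^2$ violation over $\mu$. The constraints are linear in $\mu$: total mass $1$ and a $2\times 2$ moment bound. Hence, by a Carath\'eodory/Lagrange-multiplier argument, an extremal $\mu$ can be taken supported on a bounded number of points, at most about five. One then checks those small configurations directly by explicit computation. If this is too messy, the fallback is to embed into $\R^3$ and work with the vectors $(z_i, 1)$. This is suggested by the paper's diagram, which deduces Proposition \ref{propA12} from the three-dimensional Proposition \ref{propA23}. The goal there would be a quadratic-form certificate valid on the extremal configurations.
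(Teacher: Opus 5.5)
There is a genuine gap, and it lies in the reduction itself, not only in the unfinished variational step: the geometric statement you reduce to is false.

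Bounding $u_\theta^T\Delta u_\theta$ by $g(\theta)=P_\theta^2+N_\theta^2$ amounts to choosing the best $A$ separately for each $\theta$ ($A_{ij}=1$ iff $a_ia_j>0$). The family of these inequalities forgets that one matrix $A$ must serve all directions at once. Here is a concrete counterexample.
\begin{itemize}
\item Take the frame $u_1=e_1$, $u_2=e_2$.
\item Take atoms $z=(1,0)$ of mass $0.685$, $z=(1,\pm 0.3)$ of mass $0.155$ each, and $z=(1,\pm 10)$ of mass $0.0025$ each. Then $\sum_i p_iz_iz_i^T=\diag(1,0.5279)\preceq I_2$.
\item Take $(s_1,s_2)=(1-\epsilon,\delta)$ with $\epsilon=10^{-4}$, $\delta=10^{-2}$.
\end{itemize}
Write $z=(1,Y)$ and $t=\tan\theta$. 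Then $P_\theta-N_\theta=\cos\theta$, so $g(\theta)=\cos^2\theta\,(1+2N+2N^2)$ with $N=E(1+tY)_-$. Put $s=1/|t|$ and $\Psi(s)=E(Y-s)_+$; by symmetry of $Y$, $N=\Psi(s)/s$. Your hypothesis becomes
\[
\delta-\epsilon s^2\le 2s\Psi(s)+2\Psi(s)^2\qquad\text{for all } s>0 .
\]
The difference of the two sides is concave on $[0,0.3]$ and on $[0.3,10]$. It equals $\epsilon s^2-\delta\ge 0$ for $s\ge 10$. At $s=0,0.3,10$ it takes the values $\approx 2.2\cdot 10^{-4}$, $\approx 5.7\cdot 10^{-3}$ and $0$. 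So the hypothesis holds for every $\theta$, including $\theta=0,\frac{\pi}{2}$. Nevertheless $s_1^2+s_1s_2+2s_2^2=1.00999901>1.0099=s_1+s_2$.

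With a suitable symmetric $Y$ one can even make $\epsilon$ exponentially small in $1/\delta$, whereas the true boundary is $s_1=1-2s_2^2+O(s_2^3)$. So neither a choice of finitely many directions nor a variational analysis of this relaxed problem can yield the proposition.

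Separately, averaging over $\theta$ cannot give $s_2\le\frac12$: for atoms $\sqrt2e_1,\sqrt2e_2$ of mass $\frac12$, the average of $g$ is $\frac12+\frac1{2\pi}$. What does work is to write $g(u)=\frac12\bigl[(E|u\cdot z|)^2+(u\cdot m)^2\bigr]$ with $m=\sum_ip_iz_i$ and evaluate it at $u\perp m$.

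The paper avoids this loss in two places.

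First, it handles the mean exactly. Adjoining an orthogonal atom of mass $1-\frac{1}{1+s_1}$ turns $(s_1,s_2)$ into the eigenvalues $\frac{s_1}{1+s_1}$ (twice) and $\frac{s_2}{1+s_1}$ of a $3$-dimensional problem (Proposition \ref{propA23}). Projecting onto $w^\perp$, $w=\sum_ip_iz_i$, with $B_{i,j}=2A_{i,j}-1$ then gives $\Delta'=2U^T\Delta U$, because the projected mean vanishes. The only cost is Cauchy interlacing.

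Second, it linearizes with full-rank test matrices and a bound uniform in $\mu$. By Lemma \ref{lem:ellipse} it suffices to bound $c_1\alpha+c_2\beta=\Trace(CO\Delta O^T)$. Choosing the signs $B_{i,j}$ according to $C$ is harmless there, because Proposition \ref{propB2} gives $\sum_{i,j}p_ip_j|z_i^TCz_j|\le K_{c_1,c_2}$ for every configuration. That is a genuine support-function description of the target region. Your $\theta$-wise bounds are instead $\mu$-dependent and involve only rank-one forms, and that is exactly where the coupling is lost.

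Finally, Proposition \ref{propB2} is proved by Lagrange multipliers for a fixed sign matrix. This gives $Z_i=\Delta z_i$, so $\alpha,\beta$ are eigenvalues of $M=(B_{i,j}\sqrt{p_ip_j})$. It is combined with the $p$-variation Lemma \ref{lem:pvariation} to reach $n\le 5$. Since your objective is quadratic rather than linear in $\mu$, a Carath\'eodory-type argument alone would not reduce it to few atoms.
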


Similarly, we deduce the quadratic inequality for $s_2, s_3$ in Theorem \ref{thmA} from the following $3$-dimensional geometric statement.
\begin{proposition} \label{propA23}
Let $p_1, \dotsc, p_n \in \R_{\ge 0}$, $z_1, \dotsc, z_n \in \R^3$ such that $\sum_i p_i = 1$ and $\sum_i p_i z_i z_i^T \preceq I_3$.
Let $A \in [0,1]^{n \times n}$ be symmetric, and let
\[
\Delta = \sum_{i,j} A_{i,j} p_i p_j z_i z_j^T \in \R^{3 \times 3}.
\]
Let $s_1 \ge s_2 \ge s_3$ be the eigenvalues of $\Delta$.
Then $s_2 \le \frac{1}{2}$, and if $s_3 \ge 0$, then
\[
2 (s_2^2 + s_2 s_3 + s_3^2) \le s_2+s_3.
\]
\end{proposition}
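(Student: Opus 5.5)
The plan is to reduce Proposition \ref{propA23} to a statement about a symmetric matrix with entries in $[-1,1]$ together with one rank-one term, and then to use interlacing to shift indices. Write $A = \frac12 J + \frac12 B$, where $J$ is the all-ones matrix and $B = 2A - J \in [-1,1]^{n\times n}$ is symmetric. With $m = \sum_i p_i z_i$ this gives
\[
\Delta = \tfrac12 m m^T + \tfrac12 \Gamma, \qquad \Gamma = \sum_{i,j} B_{i,j} p_i p_j z_i z_j^T .
\]
Since $\tfrac12 mm^T \succeq 0$ has rank one, Cauchy interlacing (Weyl) gives $\mu_1 \ge s_2$ and $\mu_2 \ge s_3$, where $\mu_1 \ge \mu_2 \ge \mu_3$ are the eigenvalues of $\tfrac12\Gamma$. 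It also gives $\mu_2 \le s_2$ and $\mu_3 \le s_3$. The constraint on the $z_i$ improves as well. Since $\sum_i p_i z_i z_i^T \preceq I_3$ and $\sum p_i = 1$, the covariance $C = \sum_i p_i (z_i - m)(z_i-m)^T = \sum_i p_i z_i z_i^T - mm^T$ satisfies $C \preceq I_3 - mm^T$.

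The first concrete bound is $s_2 \le \frac12$. Take a unit vector $v$ in the span of the eigenvectors of $s_1, s_2$ with $v \perp m$; such a $v$ exists because that span is $2$-dimensional. Then $v^T \Delta v \ge s_2$. On the other hand, writing $a_i = v\cdot z_i$, we have $\sum_i p_i a_i = 0$, $\sum_i p_i a_i^2 \le 1$, and
\[
v^T\Delta v = \sum_{i,j} A_{i,j} p_i a_i p_j a_j \le \Bigl(\sum_{a_i>0} p_i a_i\Bigr)^2 + \Bigl(\sum_{a_i<0} p_i |a_i|\Bigr)^2 .
\]
The inequality holds because $A_{i,j}\in[0,1]$, so we keep only the same-sign pairs. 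Both sums equal some $t$, and Cauchy--Schwarz on the positive and negative parts, with masses $q$ and $1-q$, gives $t^2 \le \min(q, 1-q)\cdot(\text{second moment})$. The total is $\le \frac12$.

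For the quadratic inequality, I would restrict to the plane $W$ spanned by the eigenvectors of $s_2$ and $s_3$, and run the same sign-splitting argument with two test directions. Let $P$ be the orthogonal projection onto $W$ and set $w_i = P z_i$. Then $P\Delta P$ has eigenvalues $s_2, s_3$ on $W$. The additional information, compared with Proposition \ref{propA12}, is that the top eigenvector $e_1$ of $\Delta$, with $e_1^T\Delta e_1 = s_1 \ge s_2$, is orthogonal to $W$ and still uses up part of the budget $\sum_i p_i z_i z_i^T \preceq I_3$. Concretely, I would bound $\Trace(P\Delta P) = s_2 + s_3$ and $\Trace((P\Delta P)^2) = s_2^2 + s_3^2$ in terms of the $w_i$. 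The aim is the inequality
\[
2\bigl(s_2^2 + s_2 s_3 + s_3^2\bigr) = \tfrac32 (s_2+s_3)^2 + \tfrac12 (s_2-s_3)^2 \le s_2+s_3 ,
\]
obtained by using the third direction to force $s_1 \ge s_2$ while keeping $s_1^2 + s_2^2 + s_3^2$ controlled. A crude bound of this type comes from $\Trace(\Delta^2) \le \bigl(\sum_{i,j} p_ip_j (z_i\cdot z_j)^2\bigr) \le \Trace\bigl(\sum p_i z_i z_i^T\bigr)^{\cdots}$, which is too weak as it stands. The intended inequality should instead come from an extremal configuration analysis: the optimum is expected at few points, namely three clusters of equal weight arranged symmetrically, which gives $s_2 = s_3 = \frac13$.

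The main obstacle is this last step. The reduction and interlacing above are routine, and the genuinely hard part is an extremal geometric inequality about weighted point configurations in $\R^3$ (or in $\R^2$ after the $[-1,1]$ reformulation) with a sign-pattern matrix. I would first try to show that for fixed $z_i$ the extremal $A$ is a $0/1$ matrix, since the quantity is linear in $A$ on the box. Next I would show that the extremal configuration has only a few distinct points, using Lagrange multipliers on the $p_i$ and $z_i$. Finally I would verify the quadratic inequality directly on the resulting small cases, with at most about five points.
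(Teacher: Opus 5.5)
Your opening reduction is correct. With $B=2A-J$ and $m=\sum_i p_iz_i$ you have $\Delta=\tfrac12 mm^T+\tfrac12\Gamma$, and rank-one interlacing gives $s_2\le\mu_1$ and $s_3\le\mu_2$. Your direct argument for $s_2\le\frac12$ also works, but the Cauchy--Schwarz step must be stated as $t^2\le qM_+$ and $t^2\le(1-q)M_-$, where $M_\pm$ are the second moments of the positive and negative parts and $M_++M_-\le1$. This gives $t^2\le q(1-q)\le\frac14$. Read literally, your bound of $\min(q,1-q)$ times the total second moment only yields $v^T\Delta v\le 1$.

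\textbf{The gap.} The quadratic inequality is the real content of the proposition, and you do not prove it: you outline a program and concede that the trace-of-squares bound is too weak. The program also has problems as stated.
\begin{itemize}
\item The target is a quadratic condition on eigenvalues, not a linear functional of $A$, so ``the extremal $A$ is $0/1$ by linearity'' is unavailable without first linearizing.
\item The plane spanned by the $s_2,s_3$ eigenvectors does not remove the rank-one $J$-part.
\item The extremal set is a whole ellipse arc. For example, $(\frac12,0)$ comes from two equal clusters and $(\frac13,\frac13)$ from three, so no single symmetric configuration can certify the inequality.
\end{itemize}

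\textbf{What is missing} is the following chain.

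First, your interlacing already reduces the problem to the plane. Let $U\in\R^{3\times2}$ have as columns unit eigenvectors of $\Gamma$ for $2\mu_1,2\mu_2$, and set $x_i=U^Tz_i$. Then $\sum_ip_ix_ix_i^T\preceq I_2$ and $\sum_{i,j}B_{i,j}p_ip_jx_ix_j^T=\diag(2\mu_1,2\mu_2)$. So it suffices to show that the eigenvalues $\alpha,\beta$ of any such planar $[-1,1]$-weighted matrix satisfy $\alpha^2+\beta^2\le1$, and, when $\alpha,\beta\ge0$, also $\alpha^2+\alpha\beta+\beta^2\le\alpha+\beta$. Convexity of this ellipse then transfers the bound to $(2s_2,2s_3)$: it contains $(0,0),(1,0),(0,1),(\alpha,\beta)$, hence the rectangle $[0,\alpha]\times[0,\beta]$. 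This is the paper's reduction, except that the paper projects onto $m^\perp$, where the $J$-part vanishes identically; your variant via the top eigenspace of $\Gamma$ is equally valid.

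Second, the planar bound is linearized by supporting lines. For $\alpha,\beta\ge0$ the ellipse condition is equivalent to $c_1\alpha+c_2\beta\le K_{c_1,c_2}=\frac13(c_1+c_2+2\sqrt{c_1^2-c_1c_2+c_2^2})$ for all $c_1,c_2\ge0$. In coordinates where $\Delta$ is diagonal, $c_1\alpha+c_2\beta=\Trace(C\Delta)\le\sum_{i,j}p_ip_j|x_i^TCx_j|$ with $C=\diag(c_1,c_2)$ (not your covariance matrix). Only at this point does the optimal sign pattern $B_{i,j}=\mathrm{sign}(x_i^TCx_j)$ appear.

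Third, you must prove the extremal inequality $\sum_{i,j}p_ip_j|x_i^TCx_j|\le K_{c_1,c_2}$, and this is where the work lies:
\begin{itemize}
\item a compactness argument giving a maximizer with the minimal number of points;
\item a $p$-variation argument that bounds the number of points, ultimately to $n\le5$; the objective is convex along $p+tq$ when $\sum_iq_i=0$, $\sum_iq_ix_ix_i^T=0$ and $q$ is supported on a sign-coherent pair of index sets;
\item tightness of the constraint, $\sum_ip_ix_ix_i^T=I_2$;
\item Lagrange multipliers showing that $\Delta$ is diagonal and that $(t-\alpha)(t-\beta)\mid\chi_M$ for $M=(B_{i,j}\sqrt{p_ip_j})$;
\item the explicit cases: $n=3$, where $\chi_M=t^3-t^2+4p_1p_2p_3$ puts $(\alpha,\beta)$ on the ellipse; $n=4$, which is excluded; and $n=5$, where $\chi_M=t^5-t^4+4bt^2-16a$ with $a>0$ gives strict inequality.
\end{itemize}
Your last paragraph gestures at this third step, but it is the heart of the proof and is not carried out. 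Without the planar reduction and the supporting-line linearization it also has no tractable objective. As it stands, the proposal establishes only $s_2\le\frac12$.
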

We will prove Proposition \ref{propA12} using Proposition \ref{propA23}.
In the proof of Proposition \ref{propA23}, we use an orthogonal projection from $\R^3$ onto a plane orthogonal to $\sum_i p_i z_i$.
Then we need a plane geometric statement (Proposition \ref{propB}), where instead of $A \in [0,1]^{n \times n}$, we take a symmetric $B \in [-1,1]^{n \times n}$.
This plane geometric statement also allows us to prove the following result about two eigenvalues of such a matrix $B$.

\begin{theorem} \label{thmB}
Let $B \in [-1,1]^{n \times n}$ be a symmetric matrix with eigenvalues $\lambda_1$, \ldots, $\lambda_n$.
Let $\alpha = \frac{\lambda_i}{n}$, $\beta = \frac{\lambda_j}{n}$ for some $i \neq j$.
Then $\alpha^2 + \beta^2 \le 1$, and if $\alpha \beta \ge 0$, then
\begin{equation}
\alpha^2 + \alpha \beta + \beta^2 \le |\alpha+\beta|.
\end{equation}
\end{theorem}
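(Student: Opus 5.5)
The plan is to derive Theorem \ref{thmB} from the plane geometric statement, Proposition \ref{propB}, which is stated later in the paper. Its exact form is not yet visible. I expect it to parallel Propositions \ref{propA12} and \ref{propA23}: points $z_1,\dotsc,z_n\in\R^2$ with weights $p_i\ge 0$, $\sum_i p_i=1$ and $\sum_i p_i z_iz_i^T\preceq I_2$; a symmetric $B\in[-1,1]^{n\times n}$; and $\Delta=\sum_{i,j}B_{i,j}p_ip_jz_iz_j^T$. Its conclusion should be that the eigenvalues $\alpha,\beta$ of $\Delta$ satisfy $\alpha^2+\beta^2\le 1$, and $\alpha^2+\alpha\beta+\beta^2\le|\alpha+\beta|$ when $\alpha\beta\ge 0$. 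So the work is to encode two eigenvalues of $B$ into such a configuration.

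\textbf{Encoding.} Let $u,v\in\R^n$ be orthonormal eigenvectors of $B$ with $Bu=\lambda_iu$ and $Bv=\lambda_jv$. This is possible because $i\neq j$, even when $\lambda_i=\lambda_j$. Put $p_k=\frac1n$ and $z_k=\sqrt n\,(u_k,v_k)^T\in\R^2$. Then
\[
\sum_k p_kz_kz_k^T=\sum_k (u_k,v_k)^T(u_k,v_k)=I_2 ,
\]
by orthonormality, so the covariance constraint holds with equality. Next,
\[
\Delta=\sum_{k,l}B_{k,l}\tfrac1{n^2}\,n\,(u_k,v_k)^T(u_l,v_l)=\tfrac1n\begin{pmatrix}u^TBu & u^TBv\\ v^TBu & v^TBv\end{pmatrix}=\diag\!\left(\tfrac{\lambda_i}{n},\tfrac{\lambda_j}{n}\right).
\]
Hence the eigenvalues of $\Delta$ are exactly $\alpha$ and $\beta$, and applying Proposition \ref{propB} gives both inequalities at once. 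If Proposition \ref{propB} is phrased with ordered eigenvalues $s_1\ge s_2$, the conclusions are symmetric in $\alpha$ and $\beta$, so the ordering does not matter.

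\textbf{Main obstacle.} The real difficulty lies entirely in proving Proposition \ref{propB}, which the paper reduces to Proposition \ref{propB2} via Lagrange multipliers and a small-configuration analysis; here I would just invoke it.

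\textbf{Fallback for $\alpha^2+\beta^2\le 1$.} If Proposition \ref{propB} does not include this bound, it is direct:
\[
\lambda_i^2+\lambda_j^2\le\sum_k\lambda_k^2=\Trace(B^2)=\sum_{k,l}B_{k,l}^2\le n^2 .
\]

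\textbf{Sign reduction for the quadratic inequality.} If Proposition \ref{propB} only treats $\alpha,\beta\ge 0$, replace $B$ by $-B\in[-1,1]^{n\times n}$ when $\alpha,\beta\le 0$. This flips both signs, and both sides of $\alpha^2+\alpha\beta+\beta^2\le|\alpha+\beta|$ are invariant under $(\alpha,\beta)\mapsto(-\alpha,-\beta)$.
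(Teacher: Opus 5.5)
Your proposal is correct and is the paper's proof: the paper also takes orthonormal eigenvectors $u,v$, sets $p_k=n^{-1}$ and $z_k=\sqrt{n}\,(u_k,v_k)$, checks $\sum_k p_k z_k z_k^T=I_2$ and $\Delta=\diag(\alpha,\beta)$, and invokes Proposition \ref{propB}, which already contains both the bound $\alpha^2+\beta^2\le 1$ and the reduction to $\alpha,\beta\ge 0$ via $-B$. Your trace fallback $\lambda_i^2+\lambda_j^2\le\Trace(B^2)\le n^2$ is a valid and more elementary route to the first inequality, but it is not needed here.
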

\begin{remark} \label{rem:thmB}
We give two constructions to show that the bound here is sharp in some sense.
First, let $n', n'' \ge 1$, $n = n'+n''$, $B \in \{-1,1\}^{n \times n}$, where $B_{i,j} = -1$ if and only if $\max(i,j) \le n'$.
Then
\[
\chi_B(t) = t^{n-2} (t^2 - (n''-n') t - 2 n' n'') = t^{n-2} (t - n \alpha) (t - n \beta)
\]
with
\[
\alpha = \frac{1}{2n} (n''-n' + \sqrt{(n'+n'')^2+4n'n''}), \ \beta = \frac{1}{2n} (n''-n' - \sqrt{(n'+n'')^2+4n'n''}).
\]
Then $\alpha^2+\beta^2 = 1$.
Here $\alpha > 0 > \beta$ only depend on $\frac{n'}{n''}$, and $(\alpha, \beta) \to (1,0)$ as $\frac{n'}{n''} \to 0$, and $(\alpha, \beta) \to (0,-1)$ as $\frac{n'}{n''} \to \infty$, so we can approximate all points of the circle arc between these (see figure \ref{fig:ab}).
Taking $-B$ we get the opposite circle arc too.

For our second construction, let $n_1, n_2, n_3 \ge 1$, $n = n_1+n_2+n_3$, $B \in \{-1,1\}^{n \times n}$ symmetric, where for $i \le j$ let $B_{i,j} = B_{j,i} = -1$ if and only if $i \le n_1$ and $j > n_1+n_2$.
Then
\[
\chi_B(t) = t^{n-3} (t^3 - n t^2 + 4 n_1 n_2 n_3) = t^{n-3} (t - n \alpha) (t - n \beta) (t - n \gamma)
\]
for some $\alpha \ge \beta \ge \gamma$.
Here $(t - \alpha) (t  - \beta) (t - \gamma) = t^3 - t^2 + c$ with $c = 4 \frac{n_1 n_2 n_3}{n^3} \in (0, \frac{4}{27}]$ (with $c = \frac{4}{27}$ only for $n_1 = n_2 = n_3$), and $(\alpha, \beta, \gamma) \to (1,0,0)$ as $c \to 0$, and $(\alpha, \beta, \gamma) = (\frac{2}{3}, \frac{2}{3}, -\frac{1}{3})$ at $c = \frac{4}{27}$.
So $\alpha \ge \beta > 0 > \gamma$ (because the zeros depend continuously on $c$, see e.g., \cite{Hir-ContOfRoots}), $\gamma = 1 - \alpha - \beta$, and $0 = \alpha \beta + \alpha \gamma + \beta \gamma = \alpha \beta + (\alpha + \beta) (1 - \alpha - \beta) = \alpha + \beta - \alpha^2 - \beta^2 - \alpha \beta$.
We could of course take $(\beta, \alpha)$ instead of $(\alpha, \beta)$, so we can approximate the whole ellipse arc between $(1,0)$ and $(0,1)$ (see figure \ref{fig:ab}).
Taking $-B$ we also get the opposite ellipse arc.
\end{remark}

Figure \ref{fig:ab} shows the closure of the set of attainable $(\alpha, \beta)$.
We will prove this theorem using the following plane geometric statement.

\begin{proposition} \label{propB}
Let $p_1, \dotsc, p_n \in \R_{\ge 0}$, $z_1, \dotsc, z_n \in \R^2$ such that $\sum_i p_i = 1$ and $\sum_i p_i z_i z_i^T \preceq I_2$.
Let $B \in [-1,1]^{n \times n}$ be symmetric, and let
\[
\Delta = \sum_{i,j} B_{i,j} p_i p_j z_i z_j^T \in \R^{2 \times 2}.
\]
Let $\alpha$, $\beta$ be the eigenvalues of $\Delta$.
Then $\alpha^2 + \beta^2 \le 1$, and if $\alpha \beta \ge 0$, then
\[
\alpha^2 + \alpha \beta + \beta^2 \le |\alpha+\beta|.
\]
\end{proposition}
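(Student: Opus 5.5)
We first normalize and reduce to an optimization over $B$. Write $w_i = \sqrt{p_i}\, z_i$, and let $W \in \R^{2 \times n}$ be the matrix with columns $w_i$, so that $W W^T = \sum_i p_i z_i z_i^T \preceq I_2$. Put $D = \diag(\sqrt{p_i})$. Then
\[
\Delta = W D B D W^T .
\]

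\textbf{The bound $\alpha^2+\beta^2 \le 1$.} Since $\alpha^2+\beta^2 = \Trace(\Delta^2) = \|W D B D W^T\|_F^2$, and $W$ has operator norm at most $1$ (so $W X W^T$ is the compression of $X$ by a contraction), we get
\[
\alpha^2+\beta^2 \le \|DBD\|_F^2 = \sum_{i,j} B_{i,j}^2 p_i p_j \le \Big(\sum_i p_i\Big)^2 = 1 .
\]

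\textbf{The quadratic bound: reduction.} By replacing $B$ with $-B$ we may assume $\alpha, \beta \ge 0$. We must then show $\alpha^2+\alpha\beta+\beta^2 \le \alpha+\beta$, i.e. $\Trace(\Delta)^2 - \det\Delta \le \Trace\Delta$. The map $B \mapsto \Delta$ is linear, while the target inequality is not convex in $\Delta$. The plan is therefore to maximize over $B$ in the direction that matters. Fix the eigenvector frame of $\Delta$ and think of $(\alpha,\beta)$ as $(u^T\Delta u, v^T\Delta v)$ for orthonormal $u,v$. These quantities are linear in $B$: with $a_i = \sqrt{p_i}\,\langle w_i,u\rangle$ and $b_i = \sqrt{p_i}\,\langle w_i,v\rangle$,
\[
\alpha = a^T B a, \qquad \beta = b^T B b .
\]
For any fixed weights $\mu,\nu \ge 0$, the maximum of $\mu\alpha+\nu\beta$ over $B\in[-1,1]^{n\times n}$ is attained at the sign matrix
\[
B_{i,j} = \operatorname{sign}(\mu a_i a_j + \nu b_i b_j),
\]
which gives the bound $\mu\alpha+\nu\beta \le \sum_{i,j} |\mu a_i a_j + \nu b_i b_j|$. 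It thus suffices to prove a family of linear bounds: for every supporting line of the ellipse arc from $(1,0)$ to $(0,1)$ (the arc of $x^2+xy+y^2 = x+y$ in the first quadrant) we need
\[
\sum_{i,j} |\mu a_i a_j + \nu b_i b_j| \le h(\mu,\nu),
\]
where $h$ is the support function of the region $\{x,y\ge0,\ x^2+xy+y^2\le x+y\}$. The constraints available are $\sum a_i^2 \le 1$, $\sum b_i^2 \le 1$, $\sum a_ib_i = 0$ and $\sum_i p_i = 1$, the last entering through $a_i^2+b_i^2 \le p_i \|z_i\|^2$. Note, however, that the constraint $\sum p_i z_i z_i^T \preceq I$ controls $\sum a_i^2$ but does not tie it to $\sum p_i$ pointwise. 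I therefore expect to have to keep the vectors $z_i$ themselves and go back to the geometric form
\[
\mu\alpha+\nu\beta = \sum_{i,j} p_i p_j B_{i,j}\, q(z_i,z_j), \qquad q(x,y) = \mu\langle x,u\rangle\langle y,u\rangle + \nu\langle x,v\rangle\langle y,v\rangle .
\]

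\textbf{The extremal problem in the plane.} After taking the optimal signs, the task is to maximize
\[
\sum_{i,j} p_i p_j\, |q(z_i,z_j)|
\]
over probability weights $p$ and points $z_i \in \R^2$ subject to $\sum_i p_i z_i z_i^T \preceq I$. Consistent with the paper's outline, I would treat this by Lagrange multipliers, presumably via Proposition \ref{propB2} (stated later). At a critical configuration, each point $z_i$ with $p_i>0$ satisfies a stationarity equation: $z_i$ is determined by $\sum_j p_j \operatorname{sign}(q(z_i,z_j))$ applied to the dual PSD multiplier. This forces the points to lie in only a few directions, and points sharing a direction can be merged. That leaves at most five points, which should be handled by direct computation. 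The extremal configurations should be the one from the second construction in Remark \ref{rem:thmB}: three directions $z_1,z_2,z_3$ with $B=-1$ only between classes $1$ and $3$.

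\textbf{Main obstacle.} The hardest step is the Lagrange-multiplier reduction to few points and the case analysis that follows it. The non-smoothness of the absolute values (sign changes of $q(z_i,z_j)$) complicates the stationarity analysis. I would try first to fix the sign pattern, which makes the objective polynomial, and then argue that collinear or antipodal points can be merged without decreasing the objective.
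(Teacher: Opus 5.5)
Relative to Proposition~\ref{propB2}, your proposal is correct and is the paper's own proof of this proposition. In the eigenframe $O$ of $\Delta$, the bound $B_{i,j}x\le|x|$ gives $\mu\alpha+\nu\beta\le\sum_{i,j}p_ip_j|(Oz_i)^T\diag(\mu,\nu)\,Oz_j|$. The rotated points still satisfy $\sum_ip_i(Oz_i)(Oz_i)^T\preceq I_2$, so Proposition~\ref{propB2} bounds this by $K_{\mu,\nu}$, which equals your $h(\mu,\nu)$ for $\mu,\nu\ge0$. Lemma~\ref{lem:ellipse} then turns the linear bounds into $\alpha^2+\alpha\beta+\beta^2\le\alpha+\beta$. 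Your proof of $\alpha^2+\beta^2\le1$, compressing $DBD$ by the contraction $W$, is a slightly shorter form of the paper's Cauchy--Schwarz argument ending in $\Trace((P\Delta)^2)\le\Trace(\Delta^2)$. One slip, in a formulation you then discard: with $a_i=p_i\langle z_i,u\rangle$, the constraint $\sum_ia_ib_i=0$ you list is false in general.

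If you mean to prove the planar inequality yourself rather than cite it, as your ``main obstacle'' suggests, the sketch has a genuine gap: first-order conditions do not force few directions. At a critical point one gets $Z_i:=\sum_jB_{i,j}p_jz_j=\Delta z_i$ with $z_i^TC\Delta z_i$ independent of $i$. But for $c_1=c_2$, any odd number of equally spaced directions in a half-turn (equal weights, radius $\sqrt2$) satisfies all of this by symmetry. No two of those points are collinear, so merging does nothing.

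The missing idea is the paper's $p$-variation lemma. Let $S(p,z)=\sum_{i,j}p_ip_j|z_i^TCz_j|$, and let $(J',J'')$ be a special pair: $z_i^TCz_j\ge0$ within $J'$ and within $J''$, and $\le0$ across. If $q\neq0$ is supported on $J'\cup J''$ with $\sum_iq_i=0$ and $\sum_iq_iz_iz_i^T=0$, then $t\mapsto S(p+tq,z)$ is a convex quadratic with leading coefficient $x^TCx\ge0$. So some $p_k$ can be pushed to $0$ without decreasing $S$. This does two jobs your sketch leaves open:
\begin{enumerate}
\item Once $n>16$, five of the $\sqrt{C}z_i$ lie in a closed quadrant, so the supremum over $n$ points stabilizes at $n=16$. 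A maximizer then exists by compactness in the variables $y_i=\sqrt{p_i}z_i$.
\item At a maximizer with minimal $n$, it rules out special pairs of size $4$, since there $\sum_iq_iz_iz_i^T=0$ already forces $\sum_iq_i=0$. Ordering the $\sqrt{C}z_i$ by angle, this gives $n\le5$ and determines $B$.
\end{enumerate}
You also need tightness $\sum_ip_iz_iz_i^T=I_2$ at the optimum, obtained by adding a point of small mass. Then $\alpha,\beta$ are eigenvalues of $M=(B_{i,j}\sqrt{p_ip_j})$. This reduces $n=3,5$ to the characteristic polynomials $t^3-t^2+c$ and $t^5-t^4+4bt^2-16a$, while $n=4$ is excluded by $Z_i\neq\pm Z_j$.
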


The convex region $\alpha^2+\alpha \beta + \beta^2 \le \alpha+\beta$ with $\alpha, \beta \ge 0$ has a description as an intersection of half planes, which will be useful for us.
\begin{lemma} \label{lem:ellipse}
Let $\alpha, \beta \ge 0$.
Then $\alpha^2 + \alpha \beta + \beta^2 \le \alpha + \beta$ if and only if
\begin{equation}
c_1 \alpha + c_2 \beta \le K_{c_1, c_2} := \frac{1}{3} \left(c_1+c_2 + 2 \sqrt{c_1^2 - c_1 c_2 + c_2^2}\right)
\end{equation}
for every $c_1, c_2 \ge 0$.
\end{lemma}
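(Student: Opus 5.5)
The region $E = \{(\alpha,\beta) \in \R_{\ge 0}^2 : \alpha^2+\alpha\beta+\beta^2 \le \alpha+\beta\}$ is convex: the quadratic form $\alpha^2+\alpha\beta+\beta^2$ is positive definite, so $E$ is an ellipse intersected with the closed first quadrant. A closed convex set is the intersection of its supporting half planes, so $(\alpha,\beta) \in E$ if and only if $c_1\alpha + c_2\beta \le h(c_1,c_2) := \max_{(x,y)\in E} (c_1 x + c_2 y)$ for every $(c_1,c_2) \in \R^2$. The plan is to show three things: directions $(c_1,c_2)$ with a negative coordinate are not needed once $\alpha,\beta \ge 0$ is assumed; for $c_1,c_2 \ge 0$ the support function satisfies $h(c_1,c_2) = K_{c_1,c_2}$; and the two facts together give the equivalence.

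To compute $h$ for $c_1, c_2 \ge 0$, I will first find the maximum of $c_1x+c_2y$ over the full ellipse $\{x^2+xy+y^2 \le x+y\}$ using a Lagrange multiplier. The substitution $u = x+y$, $v = x-y$ is cleaner: the constraint becomes $\tfrac{3}{4}u^2 + \tfrac14 v^2 \le u$, i.e.\ $3(u-\tfrac23)^2 + v^2 \le \tfrac43$. The functional becomes $\tfrac{c_1+c_2}{2}u + \tfrac{c_1-c_2}{2}v$. Its maximum over this axis-aligned ellipse is
\[
\tfrac{c_1+c_2}{3} + \sqrt{\tfrac43\left(\tfrac{(c_1+c_2)^2}{12} + \tfrac{(c_1-c_2)^2}{4}\right)} = \tfrac13\left(c_1+c_2+2\sqrt{c_1^2-c_1c_2+c_2^2}\right) = K_{c_1,c_2}.
\]
It remains to check that the maximizer lies in the first quadrant, so that the maximum over $E$ equals the maximum over the ellipse. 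The maximizer is the point where the outward normal of the ellipse is $(c_1,c_2)$. The ellipse passes through $(0,0)$, $(1,0)$ and $(0,1)$. Its normal at $(1,0)$ is proportional to $(2x+y-1,\,x+2y-1) = (1,0)$, and at $(0,1)$ it is proportional to $(0,1)$. Hence the arc from $(1,0)$ to $(0,1)$ through the first quadrant carries exactly the nonnegative normal directions, and the maximizer lies in $E$. This gives $h = K$ on $\R_{\ge 0}^2$, and the forward implication follows immediately.

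For the converse, suppose $\alpha,\beta \ge 0$ satisfy all the inequalities with $c_1,c_2 \ge 0$. Assume for contradiction that $(\alpha,\beta) \notin E$. Then $Q := \alpha^2+\alpha\beta+\beta^2 - \alpha - \beta > 0$. There are two ways to finish. The first is the separating hyperplane theorem: it gives some $(c_1,c_2)$ with $c_1\alpha + c_2\beta > h(c_1,c_2)$. One would still have to argue that this $c$ can be taken nonnegative, which uses that $E$ contains the segments $[0,1]\times\{0\}$ and $\{0\}\times[0,1]$ near the boundary. Cleaner and more direct is the second: choose $(c_1,c_2) = (2\alpha+\beta-1,\ \alpha+2\beta-1)$, the gradient of the quadratic, or rather the normal at the radial projection of $(\alpha,\beta)$ onto the ellipse from the centre $(\tfrac13,\tfrac13)$. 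The main obstacle is the sign issue: this normal may have a negative coordinate, for instance when $(\alpha,\beta)$ lies near the $\alpha$-axis beyond $1$. In that case I will use $(c_1,c_2) = (1,0)$ instead. Then $K_{1,0} = 1$, and the inequality reads $\alpha \le 1$, which fails in that region. The planned case split is therefore:
\begin{itemize}
\item if $\alpha > 1$ or $\beta > 1$, use $(1,0)$ or $(0,1)$;
\item otherwise $(\alpha,\beta) \in [0,1]^2 \setminus E$, and the radial projection from $(\tfrac13,\tfrac13)$ lands on the arc between $(1,0)$ and $(0,1)$, whose normals are nonnegative. The supporting line there then strictly separates $(\alpha,\beta)$, since $(\alpha,\beta)$ lies beyond the projection along the ray.
\end{itemize}
The remaining check is that for points of $[0,1]^2\setminus E$ the ray from the centre meets the ellipse on that arc and not on the other arc, which lies in $\{x<0\}\cup\{y<0\}$. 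I will verify this from the fact that the other arc is separated from $[0,1]^2 \setminus E$ by the coordinate axes.
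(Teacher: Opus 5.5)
Your proof is correct and is essentially the paper's argument. The forward direction is the same computation of the support function of the full ellipse: your $(u,v)$ substitution is the paper's change of variables up to scaling, and since $E$ lies inside the full ellipse you do not even need the maximizer to lie in the quadrant. In the converse, your normal at the radial projection is a positive multiple of the paper's test direction $(c_1,c_2)=(2\alpha+\beta-1,\alpha+2\beta-1)$.

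However, the sign issue you anticipate does not occur. If $\alpha+\beta\le 1$ then $\alpha^2+\alpha\beta+\beta^2\le(\alpha+\beta)^2\le\alpha+\beta$, so every point of the quadrant outside $E$ has $\alpha+\beta>1$ and hence $c_1,c_2>0$. For example, at $(\alpha,0)$ with $\alpha>1$ the direction is $(2\alpha-1,\alpha-1)$, which has no negative coordinate. This makes your case split with $(1,0),(0,1)$ and the check of which arc the ray meets unnecessary. The paper finishes directly with the identities $3(c_1\alpha+c_2\beta)-c_1-c_2=2(3\eta+1)$ and $c_1^2-c_1c_2+c_2^2=3\eta+1$, where $\eta=\alpha^2+\alpha\beta+\beta^2-\alpha-\beta$, which give $3\eta+1\le\sqrt{3\eta+1}$ and hence $\eta\le 0$.
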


We will prove Proposition \ref{propB} using the following result.
\begin{proposition} \label{propB2}
Let $p_1, \dotsc, p_n \in \R_{\ge 0}$, $z_1, \dotsc, z_n \in \R^2$ such that $\sum_i p_i = 1$ and $\sum_i p_i z_i z_i^T \preceq I_2$.
Let $c_1, c_2 \ge 0$, $C = \diag(c_1, c_2)$.
Then
\begin{equation}
\sum_{i,j} p_i p_j |z_i^T C z_j| \le K_{c_1, c_2}.
\end{equation}
\end{proposition}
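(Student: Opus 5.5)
The plan is to view the left-hand side as a continuous function on a compact set, take a maximizer, and use symmetries, a merging argument and Lagrange multipliers to reduce to configurations with at most five points, which are then handled directly.

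\emph{Preliminary reductions.} The objective is invariant under $z_i \mapsto -z_i$, since each term $|z_i^T C z_j|$ only changes sign inside the absolute value. So we may assume every $z_i$ lies in the closed upper half plane, with directions in $[0,\pi)$. Points with $p_i = 0$ contribute nothing and can be discarded. Writing $x_i = p_i z_i$, the objective becomes $\sum_{i,j} |x_i^T C x_j|$ and the constraint becomes $\sum_i x_i x_i^T / p_i \preceq I_2$.

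\emph{Merging parallel points.} Suppose $x_i$ and $x_j$ are positive multiples of each other. Replace them by the single point $x_i + x_j$ with weight $p_i+p_j$. The objective is unchanged, because $|x_i^TCy| + |x_j^TCy| = |(x_i+x_j)^TCy|$ for every $y$, and the diagonal and cross terms match since $C \succeq 0$. The constraint is preserved, because the matrix perspective $x x^T/p$ is jointly convex and positively homogeneous, hence subadditive. Therefore we may assume the $z_i$ have pairwise distinct directions in $[0,\pi)$.

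\emph{Existence of a maximizer.} For fixed $n$ the feasible set is compact: $p_i |z_i|^2 \le 2$ gives $|x_i|^2 \le 2 p_i \le 2$. Hence a maximizer exists for each $n$. The real task is to show that the supremum over all $n$ is attained by configurations with few points.

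\emph{Lagrange conditions.} Let $\sigma_{ij} = \operatorname{sign}(z_i^T C z_j)$. On a maximizer with a fixed sign pattern the objective is smooth. We use a multiplier $\Lambda \succeq 0$ for $\sum p_i z_i z_i^T \preceq I_2$ (with $\Lambda$ supported on the active part) and a multiplier $\mu$ for $\sum p_i = 1$. Setting $y_i = \sum_j p_j \sigma_{ij} z_j$, the KKT conditions read, for every $i$ with $p_i > 0$,
\[
C y_i = \Lambda z_i, \qquad 2 z_i^T C y_i = z_i^T \Lambda z_i + \mu .
\]
Combining the two gives $z_i^T \Lambda z_i = \mu$. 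So all $z_i$ lie on one conic $\{z : z^T\Lambda z = \mu\}$, an ellipse if $\Lambda \succ 0$.

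The sign pattern is determined by the position of each direction relative to the $C$-orthogonal directions. Consequently $y_i$ is piecewise constant as the angle of $z_i$ varies, changing only when the line $C$-orthogonal to $z_i$ crosses another point. Within one such piece, $z_i = \Lambda^{-1} C y_i$ is uniquely determined, with a separate treatment when $\Lambda$ is singular. Hence each piece contains at most one point. Counting the pieces, and noting that consecutive pieces are separated by $C$-orthogonality relations between the points themselves, should bound the number of points by five. It should also restrict their configuration, for instance to a near-symmetric arrangement with respect to the axes, since $C$ is diagonal.

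\emph{Small cases and where the difficulty lies.} For configurations with at most five points we optimize the explicit finite expression directly. I expect the extremum to be a three-point configuration at mutual $C$-obtuse angles, matching the equality case of Lemma~\ref{lem:ellipse} and the three-block construction in Remark~\ref{rem:thmB}; there the value should come out to $K_{c_1,c_2}$. The degenerate cases are $c_1 c_2 = 0$, $\Lambda$ singular, and points on the boundary of a sign region, where the objective is not differentiable. These are to be handled by continuity or by perturbing $C$.

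I expect the main obstacle to be the reduction step: turning the KKT system into a rigorous bound of five points with a manageable list of sign patterns, especially at nondifferentiable sign boundaries. The remaining direct verification of the small cases, though computational, should be routine.
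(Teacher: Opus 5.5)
Your preliminary steps are sound: merging positive multiples via subadditivity of $xx^T/p$ is correct, and slightly stronger than the paper's merging of $z_i=\pm z_j$. Your stationarity conditions $Cy_i=\Lambda z_i$, $z_i^T\Lambda z_i=\mu$ are exactly the paper's, with $y_i=Z_i$, $\Lambda=C\Delta$, $\mu=\sigma$. Nondifferentiability at sign boundaries is not a real issue either. For any $B\in\{-1,1\}^{n\times n}$ with $B_{i,j}z_i^TCz_j\ge0$ at the maximizer, the smooth function $\sum_{i,j}p_ip_jB_{i,j}z_i^TCz_j$ is at most $S$ everywhere and equals $S$ there.

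The genuine gap is the reduction to at most five points, which you flag yourself, and the mechanism you suggest cannot deliver it. The breakpoints of the sign pattern are the $n$ directions $C$-orthogonal to the lines $\R z_j$. So modulo $z\mapsto -z$ there are exactly as many pieces as points, and ``at most one point per piece'' gives no bound. Nor can first-order information alone bound $n$. Take $C=I_2$, odd $n\ge3$, $p_j=\frac1n$ and $z_j=\sqrt2(\cos\frac{j\pi}{n},\sin\frac{j\pi}{n})$ for $0\le j<n$. Then $\sum_jp_jz_jz_j^T=I_2$, each piece contains exactly one point, and by rotation and reflection symmetry $y_i=\kappa z_i$ with $\kappa>0$. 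So all your KKT conditions hold with $\Lambda=\kappa I_2$. For $n\ge5$ these critical points are not maximizers: their value is below $\frac43=K_{1,1}$ and tends to $\frac4\pi$. Hence the bound on $n$ has to come from a non-local argument.

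That argument is the paper's $p$-variation lemma (Lemma~\ref{lem:pvariation}), which your proposal lacks. Let $(J',J'')$ be a special pair and let $q\ne0$ be supported on $J'\cup J''$ with $\sum_iq_i=0$ and $\sum_iq_iz_iz_i^T=0$. Then $t\mapsto S(p+tq,z)$ is a convex quadratic, since its $t^2$-coefficient is $x^TCx\ge0$ with $x=\sum_{i\in J'}q_iz_i-\sum_{i\in J''}q_iz_i$. So some $p_k$ can be pushed to $0$ without decreasing $S$.
\begin{itemize}
\item Combined with a quadrant pigeonhole, this first shows that $\sigma=\sup_n\sigma_n$ equals $\sigma_{16}$, hence is attained.
\item At a maximizer with the fewest points, the identity $z_i^TC\Delta z_i=\sigma>0$ makes $\sum_iq_i=0$ automatic. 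Only three linear conditions on $q$ remain, so every special pair has at most three elements.
\item In angular order this says that any four cyclically consecutive lines $\R\sqrt{C}z_i$ span more than a right angle. That forces $n\le5$, and $Z_i\ne\pm Z_j$ then excludes $n=4$.
\end{itemize}

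Two steps you treat as routine also need specific ideas.
\begin{itemize}
\item \emph{Invertibility of $\Lambda$.} The paper reduces to $c_1\ne c_2$. Then $\Delta$, being symmetric with $C\Delta=\Lambda$ symmetric, commutes with $C$ and is diagonal. Together with $\alpha^2+\beta^2\le1$, $K_{c_1,c_2}>\max(c_1,c_2)$ and the contradiction hypothesis $\sigma>K_{c_1,c_2}$, this gives $\Delta\succ0$.
\item \emph{The small cases.} Instead of optimizing directly, the paper observes that $\alpha,\beta$ are eigenvalues of $M=(B_{i,j}\sqrt{p_ip_j})$. In the forced sign patterns its characteristic polynomial is $t^3-t^2+c$ for $n=3$, and $t^5-t^4+4bt^2-16a$ with $a>0$ for $n=5$. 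This puts $(\alpha,\beta)$ on or inside the ellipse $\alpha^2+\alpha\beta+\beta^2\le\alpha+\beta$. Lemma~\ref{lem:ellipse} then gives $\sigma=c_1\alpha+c_2\beta\le K_{c_1,c_2}$, contradicting $\sigma>K_{c_1,c_2}$.
\end{itemize}
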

Note that this result easily generalizes to Borel probability measures on $\R^2$ (generalizing the case when the measure is supported on finitely many points $z_i$, where $z_i$ has measure $p_i$).

We can use Propositions \ref{propA12}, \ref{propA23}, \ref{propB} to get the following result.
\begin{corollary} \label{corM}
Let $A \in \R^{n \times n}$ be symmetric, and let $p_1, \dotsc, p_n \in \R_{\ge 0}$, $\sum_i p_i = 1$, $M = (A_{i,j} \sqrt{p_i p_j})_{i,j \in [n]}$.
If $A \in [0,1]^{n \times n}$, $s_1 \ge s_2$ and $(t-s_1) (t-s_2) \mid \chi_M(t)$, then $s_1 \le 1$ and $s_2 \le \frac{1}{2}$, and if also $s_2 \ge 0$, then
\[
s_1^2 + s_1 s_2 + 2 s_2^2 \le s_1 + s_2.
\]
If $A \in [0,1]^{n \times n}$, $s_1 \ge s_2 \ge s_3 \ge 0$ and $(t-s_1) (t-s_2) (t-s_3) \mid \chi_M(t)$, then
\[
2 (s_2^2 + s_2 s_3 + s_3^2) \le s_2+s_3.
\]
If $A \in [-1,1]^{n \times n}$, $\alpha, \beta \in \R$ and $(t-\alpha) (t - \beta) \mid \chi_M(t)$, then $\alpha^2 + \beta^2 \le 1$, and if also $\alpha \beta \ge 0$, then
\[
\alpha^2 + \alpha \beta + \beta^2 \le |\alpha+\beta|.
\]
\end{corollary}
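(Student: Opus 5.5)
The plan is to realize the given eigenvalues of $M$ as eigenvalues of a matrix $\Delta$ of the form appearing in Propositions \ref{propA12}, \ref{propA23} and \ref{propB}, and then quote those propositions.

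\textbf{Orthonormal eigenvectors.} Suppose $(t-s_1)\cdots(t-s_k) \mid \chi_M(t)$, with $k \in \{2,3\}$. Since $M$ is real symmetric, we can choose orthonormal eigenvectors $v_1, \dotsc, v_k \in \R^n$ with $M v_a = s_a v_a$. Multiplicities are handled correctly because the divisibility gives enough dimensions in each eigenspace. Let $V \in \R^{n \times k}$ have columns $v_a$. Then $V^T V = I_k$ and $V^T M V = \diag(s_1, \dotsc, s_k)$.

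\textbf{Choice of the points.} Let $w_i \in \R^k$ be the $i$-th row of $V$, written as a column vector. For $p_i > 0$ put $z_i = w_i / \sqrt{p_i}$. For $p_i = 0$, the $i$-th row and column of $M$ vanish. Hence for $s_a \neq 0$ the $i$-th coordinate of $v_a$ is $0$, because it equals $(M v_a)_i / s_a$.

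\textbf{The zero-weight issue.} If some $s_a = 0$ and $p_i = 0$, I will instead choose the eigenvector inside the $0$-eigenspace to avoid those coordinates. If that is impossible, I will first delete the indices with $p_i = 0$. Deleting them removes only zero rows and columns of $M$, so it only removes zero eigenvalues. The leftover zero eigenvalues not represented can then be treated as follows. For the $[0,1]$ and $[-1,1]$ statements, I will reduce to the case where all $p_i > 0$ by perturbing $p$ slightly (replace $p$ by $(1-\varepsilon)p + \varepsilon/n$). Continuity of the eigenvalues, together with the closedness of the conclusions, then finishes the argument, since the roots of $\chi_M$ vary continuously.

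Perturbation does not preserve exact divisibility, but we can track roots. The eigenvalues of $M$ vary continuously, so the multiset $\{s_1, \dotsc, s_k\}$ is a limit of subsets of eigenvalues of the perturbed $M_\varepsilon$. The conditions $s_2 \ge 0$ or $\alpha\beta \ge 0$ may fail slightly for the perturbed matrix. This is the technical obstacle. I would handle it by noting that for $s_3 = 0$ or $s_2 = 0$, or for $\alpha\beta = 0$, the claims reduce to the weaker bounds ($s_2 \le 1/2$, $\alpha^2+\beta^2 \le 1$ and the like). Alternatively, I would take limits inside the regions using closedness plus the fact that the boundary cases are limits of interior ones.

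\textbf{Verifying the hypotheses.} Assume all $p_i > 0$. Then
\[
\sum_i p_i z_i z_i^T = \sum_i w_i w_i^T = V^T V = I_k \preceq I_k .
\]
Also
\[
\Delta = \sum_{i,j} A_{i,j} p_i p_j z_i z_j^T = \sum_{i,j} A_{i,j} \sqrt{p_i p_j}\, w_i w_j^T = V^T M V = \diag(s_1, \dotsc, s_k).
\]
So the eigenvalues of $\Delta$ are exactly $s_1, \dotsc, s_k$.

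\textbf{Applying the propositions.} For $A \in [0,1]^{n\times n}$ with $k=2$, Proposition \ref{propA12} gives $s_1 \le 1$, $s_2 \le \frac12$ and the quadratic bound. With $k=3$, Proposition \ref{propA23} gives the bound for $(s_2,s_3)$. For $A \in [-1,1]^{n\times n}$ with $k=2$ and $(s_1,s_2) = (\alpha,\beta)$, Proposition \ref{propB} gives the claim.

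For $\alpha = \beta$ we need a $2$-dimensional eigenspace, which divisibility by $(t-\alpha)^2$ ensures. The same holds for repeated $s_a$.
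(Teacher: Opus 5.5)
Your proof is correct. When all $p_i>0$ it is exactly the paper's argument: take orthonormal eigenvectors as the columns of $V$, set $z_i=w_i/\sqrt{p_i}$, so that $\Delta=V^TMV=\diag(s_1,\dotsc,s_k)$, and quote Propositions \ref{propA12}, \ref{propA23}, \ref{propB}.

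The difference is the zero-weight case, and your detour there is unnecessary. That detour consists of choosing eigenvectors inside the $0$-eigenspace, deleting indices, perturbing $p$, tracking roots by continuity, and splitting off $s_2=0$, $s_3=0$, $\alpha\beta=0$.

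The paper simply sets $z_i=0$ whenever $p_i=0$.
\begin{itemize}
\item $\Delta=\sum_{i,j}M_{i,j}w_iw_j^T=V^TMV$ still holds. Every term with $p_ip_j=0$ vanishes on both sides, since $M_{i,j}=A_{i,j}\sqrt{p_ip_j}=0$.
\item $\sum_i p_iz_iz_i^T=\sum_{i:\,p_i>0}w_iw_i^T\preceq V^TV=I_k$.
\end{itemize}
So the only thing lost at zero-weight indices is equality in $\sum_ip_iz_iz_i^T=I_k$. The propositions only assume $\preceq I_k$, so nothing more is needed, and the eigenvalue $0$ needs no special treatment.

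Your perturbation route does go through. The ordered eigenvalues of $M_\varepsilon$ depend continuously on $\varepsilon$. Strict sign conditions ($s_2>0$, $s_3>0$, $\alpha\beta>0$) persist for small $\varepsilon$. The boundary cases reduce to the unconditional bounds $s_1\le 1$, $s_2\le\frac12$, $\alpha^2+\beta^2\le 1$. But it costs a continuity argument and a case split that the inequality form of the hypothesis makes superfluous.
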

\begin{remark}
If $n \ge 2$, $p_1, \dotsc, p_n \ge 0$ with $\sum_i p_i = 1$, $A \in [0,1]^{n \times n}$ is symmetric, $A_{i,i} = 1$ for every $i$, and the eigenvalues of $M = (A_{i,j} \sqrt{p_i p_j})_{i,j \in [n]}$ are $s_1 \ge \dotsc \ge s_n$, then $s_2 \ge 0$, because $1 = \Trace(M) \le s_1 + (n-1) s_2$ and $s_1 \le 1$.
Note that $M = D^T A D$ with $D = \diag (\sqrt{p_i})_{i \in [n]}$.
So if $p_1, \dotsc, p_n > 0$, then by Sylvester's law of inertia, $M$ and $A$ have the same signature: they have the same rank, and the same number of positive/negative eigenvalues.
So e.g., if $\lambda_3(A) \ge 0$, then $\lambda_3(M) \ge 0$ (and by continuity, we do not even need $p_i > 0$ here, $p_i \ge 0$ suffices).
\end{remark}

\begin{figure}
\centering
\begin{tikzpicture}[thick, scale=8]
\draw[rotate around={-22.5:(0,0)}, very thick, fill=black!20] ({sqrt(2+sqrt(2))/2},{sqrt(2-sqrt(2))/2}) arc ({asin(sqrt(1/2 - 5/16*sqrt(2)))}: {180 - asin(sqrt(1/2 + 11/32*sqrt(2)))} :{1/sqrt(7/4*(3-sqrt(2)))} and {1/sqrt(7/4*(3+sqrt(2)))}) -- (0,0) -- cycle;
\draw[->] (-0.2,0)--(1.23,0) node[above]{$s_1$};
\draw[->] (0,-0.2)--(0,1.23) node[right]{$s_2$};
\draw[rotate around={-22.5:(0,0)}] ({sqrt(20+2*sqrt(2))/14},{sqrt(20-2*sqrt(2))/14}) ellipse ({1/sqrt(7/4*(3-sqrt(2)))} and {1/sqrt(7/4*(3+sqrt(2)))});
\draw[dashed] (1/2,0) -- (1/2,1/2) -- (0,1/2);
\draw[dashed] (6/7,0) -- (6/7,2/7) -- (0,2/7);
\draw (0,0) -- (1,1);
\draw (8/7,0) -- (0,8/7);
\node at (1/2,-0.05) {$\frac{1}{2}$};
\node at (6/7,-0.05) {$\frac{6}{7}$};
\node at (1.022,-0.04) {$1$};
\node at (8/7,-0.05) {$\frac{8}{7}$};
\node at (-0.03,2/7) {$\frac{2}{7}$};
\node at (-0.03,0.54) {$\frac{1}{2}$};
\node at (-0.03,8/7) {$\frac{8}{7}$};
\node at (0.93,0.8) {$s_1=s_2$};
\node at (0.4,0.91) {$s_1+s_2=\frac{8}{7}$};
\end{tikzpicture}
\caption{The region $\mcalR_{1,2}$}
\label{fig:s1s2}
\end{figure}

\begin{figure}
\centering
\begin{tikzpicture}[thick, scale=12]
\draw[rotate around={-45:(0,0)}, very thick, fill=black!20] ({1/(2*sqrt(2))},{1/(2*sqrt(2))}) arc (30:90:{sqrt(1/6)} and {sqrt(2)/6}) -- (0,0) -- cycle;
\draw[->] (-0.2,0)--(0.77,0) node[above]{$s_2$};
\draw[->] (0,-0.2)--(0,0.77) node[right]{$s_3$};
\draw[rotate around={-45:(0,0)}] (0,{sqrt(2)/6}) ellipse ({sqrt(1/6)} and {sqrt(2)/6});
\draw[dashed] (1/3,0) -- (1/3,1/3) -- (0,1/3);
\draw (0,0) -- (2/3,2/3);
\draw (2/3,0) -- (0,2/3);
\node at (1/3,-0.035) {$\frac{1}{3}$};
\node at (0.515,-0.035) {$\frac{1}{2}$};
\node at (2/3,-0.035) {$\frac{2}{3}$};
\node at (-0.02,1/3) {$\frac{1}{3}$};
\node at (-0.02,0.53) {$\frac{1}{2}$};
\node at (-0.02,2/3) {$\frac{2}{3}$};
\node at (0.57,0.49) {$s_2=s_3$};
\node at (0.6,0.18) {$s_2+s_3=\frac{2}{3}$};
\end{tikzpicture}
\caption{The region $\mcalR_{2,3}$}
\label{fig:s2s3}
\end{figure}

\begin{figure}
\centering
\begin{tikzpicture}[scale=4]
\draw[rotate around={-45:(0,0)}, very thick, fill=black!20] ({1/sqrt(2)},{1/sqrt(2)}) arc (30:150:{sqrt(2/3)} and {sqrt(2)/3}) -- ({-1/sqrt(2)},{1/sqrt(2)}) arc (135:225:1) -- ({-1/sqrt(2)},{-1/sqrt(2)}) arc (210:330:{sqrt(2/3)} and {sqrt(2)/3}) -- ({1/sqrt(2)},{-1/sqrt(2)}) arc (-45:45:1) -- cycle;
\draw (0,0) circle (1);
\draw[->] (-1.3,0)--(1.3,0) node[above]{$\alpha$};
\draw[->] (0,-1.3)--(0,1.3) node[right]{$\beta$};
\node at (1.05,-0.08) {$1$};
\node at (-0.05,1.08) {$1$};
\node at (-1.08,-0.08) {$-1$};
\node at (-0.08,-1.08) {$-1$};
\node at (-0.05, 2/3) {$\frac{2}{3}$};
\node at (2/3,-0.08) {$\frac{2}{3}$};
\draw[rotate around={-45:(0,0)}] (0,{sqrt(2)/3}) ellipse ({sqrt(2/3)} and {sqrt(2)/3});
\draw[rotate around={135:(0,0)}] (0,{sqrt(2)/3}) ellipse ({sqrt(2/3)} and {sqrt(2)/3});
\draw[dashed] (2/3,0) -- (2/3,2/3) -- (0,2/3);
\end{tikzpicture}
\caption{The closure of the set of attainable $(\alpha, \beta)$}
\label{fig:ab}
\end{figure}

\section{Proofs}

\begin{proof}[Proof of Theorem \ref{thmA} $\Rightarrow$ Corollary \ref{corA}]
We apply Theorem \ref{thmA} for the matrix $A = I_n + A_G \in [0,1]^{n \times n}$ with eigenvalues $\lambda_1 + 1 \ge \dotsc \ge \lambda_n + 1$.
As we have seen earlier, $s_1 \ge s_2 \ge 0$, because $1 = \frac{1}{n} \Trace(A) \le s_1 + (n-1) s_2$ and $s_1 \le 1$.
If $G$ is the three-vertex path graph, then $(s_2, s_3) = (\frac{1}{3}, \frac{1-\sqrt{2}}{3})$, and the quadratic inequality for $s_2, s_3$ holds.
Now we prove that if $n \ge 3$ and $G$ is not the three-vertex path graph, then $\lambda_3 \ge -1$ (and thus $s_3 \ge 0$).
The Cauchy interlacing theorem implies that if $H$ is an induced subgraph of $G$ with at least $3$ vertices, then $\lambda_3(H) \le \lambda_3(G)$.
But the only $3$ vertex graph $H$ with $\lambda_3(H) < -1$ is the path graph.
So if $\lambda_3 < -1$, then every three vertex induced subgraph of $G$ has exactly two edges.
This cannot happen for $n \ge 5$, and can only happen for $n = 4$ if $G \cong C_4$, in which case $\lambda_3 = 0$.
So applying Theorem \ref{thmA} for $A = I + A_G$ we get the quadratic bounds.
We also get $\mcalR_{1,2}, \mcalR_{2,3} \subseteq \R_{\ge 0}^2$, because in the definition of $\mcalR_{2,3}$ we require $n_i \to \infty$ as $i \to \infty$.
So $\mcalR_{1,2}$ is a compact subset of $\{(s_1, s_2) \in \R^2; \, 0 \le s_2 \le s_1 \textrm{ and } s_1^2 + s_1 s_2 + 2 s_2^2 \le s_1+s_2 \}$, and $\mcalR_{2,3}$ is a compact subset of $\{(s_2, s_3) \in \R^2; \, 0 \le s_3 \le s_2 \textrm{ and } 2 (s_2^2 + s_2 s_3 + s_3^2) \le s_2+s_3 \}$.
We may add an appropriate number of isolated vertices to the graphs (which only add zero eigenvalues), so it follows that if $c \in [0,1]$ and $(s_1, s_2) \in \mcalR_{1,2}$, then $(c s_1, c s_2) \in \mcalR_{1,2}$ too, and similarly, if $c \in [0,1]$ and $(s_2, s_3) \in \mcalR_{2,3}$, then $(c s_2, c s_3) \in \mcalR_{2,3}$ too.
So it is enough to prove that $(s_1, s_2) \in \mcalR_{1,2}$ for $0 < s_2 < s_1$ with $s_1^2 + s_1 s_2 + 2 s_2^2 = s_1+s_2$, and that $(s_2, s_3) \in \mcalR_{2,3}$ for $0 < s_3 < s_2$ with $2 (s_2^2 + s_2 s_3 + s_3^2) = s_2+s_3$.

For $n', n'' \ge 1$ we define a graph $G_{n', n''}$ with vertex set $[2n'+n'']$.
For $i,j \in [2n'+n'']$ with $i \neq j$, $\{i,j\}$ is an edge in $G_{n',n''}$ if and only if $\max(i,j) \le n'+n''$ or $\min(i,j) \ge n'+1$.
(So $G_{n',n''}$ can be obtained by taking the graph consisting of two disjoint copies of $K_{n'}$, and taking the join of this graph with $K_{n''}$.)
Let $A_{n',n''}$ be the adjacency matrix of $G_{n',n''}$.
It is easy to check that $I_{2n'+n''} + A_{n',n''}$ has only three nonzero eigenvalues, which are the eigenvalues of $\begin{pmatrix} n' & n'' & 0 \\ n' & n'' & n' \\ 0 & n'' & n' \end{pmatrix}$, or equivalently, of $M = \begin{pmatrix} n' & \sqrt{n' n''} & 0 \\ \sqrt{n' n''} & n'' & \sqrt{n' n''} \\ 0 & \sqrt{n' n''} & n' \end{pmatrix}$.
Here $\chi_M(t) = (t - n') (t^2 - (n'+n'') t - n' n'')$, so the three nonzero eigenvalues are
\[
\frac{n'+n'' + \sqrt{(n'+n'')^2 + 4 n' n''}}{2} > n' > \frac{n'+n'' - \sqrt{(n'+n'')^2 + 4 n' n''}}{2}.
\]
First let $G = G_{n', n''}$.
Then $s_1 = \frac{n'+n'' + \sqrt{(n'+n'')^2 + 4 n' n''}}{2 (2n'+n'')}$, $s_2 = \frac{n'}{2n'+n''}$, and $s_1^2 + s_1 s_2 + 2 s_2^2 = s_1+s_2$.
So here $(s_1, s_2)$ is a point of the ellipse arc between $(1,0)$ and $(\frac{1}{2}, \frac{1}{2})$ (see figure \ref{fig:s1s2}), and it only depends on the ratio $\frac{n'}{n''} \in \R_{>0}$.
Moreover $(s_1, s_2)$ converges to $(1,0)$ as $\frac{n'}{n''} \to 0$, and it converges to $(\frac{1}{2}, \frac{1}{2})$ as $\frac{n'}{n''} \to \infty$, and we get every point of the ellipse arc in between.
So $\mcalR_{1,2} = \mcalS_{1,2}$ is indeed the given region.

Now let $G$ be the disjoint union of $G_{n',n''}$ and $K_{n'''}$ (so $G$ has $n = 2n'+n''+n'''$ vertices), where $n''' = \left\lceil \frac{n'+n'' + \sqrt{(n'+n'')^2 + 4 n' n''}}{2} \right\rceil$.
The eigenvalues of $K_{n'''}$ are $n'''-1$ and $-1$ (with multiplicity $n'''-1$).
So the largest $3$ eigenvalues of $I_n + A_G$ are $n''' \ge \frac{n'+n'' + \sqrt{(n'+n'')^2 + 4 n' n''}}{2} > n'$.
So here $s_2 = \frac{n'+n'' + \sqrt{(n'+n'')^2 + 4 n' n''}}{2 n}$ and $s_3 = \frac{n'}{n}$.
If we fix a $q > 0$ and take $n', n'' \to \infty$ and $n'/n'' \to q$, then $\frac{n'''}{n''} \to r = \frac{q + 1 + \sqrt{(q+1)^2 + 4 q}}{2}$, $s_2 \to t_2 := \frac{r}{2q + 1 + r}$ and $s_3 \to t_3 := \frac{q}{2 q + 1 + r}$.
Here $(t_2,t_3)$ is on the ellipse $2 (t_2^2 + t_2 t_3 + t_3^2) = t_2+t_3$, and $(t_2, t_3) \to (\frac{1}{2}, 0)$ as $q \to 0$, and $(t_2, t_3) \to (\frac{1}{3}, \frac{1}{3})$ as $q \to \infty$ (see figure \ref{fig:s2s3}).
So we get that the whole ellipse arc is in $\mcalR_{2,3}$, hence $\mcalR_{2,3} = \mcalS_{2,3} \cap \R_{\ge 0}^2$ is indeed the given region.
The only $G$ for which $s_3<0$ is the three-vertex path graph, so we also get $\mcalS_{2,3} = \mcalR_{2,3} \cup \left\{\left(\frac{1}{3}, \frac{1-\sqrt{2}}{3}\right)\right\}$.
\end{proof}

\begin{proof}[Proof of Propositions \ref{propA12}, \ref{propA23} $\Rightarrow$ Theorem \ref{thmA}]
The matrix $A$ is not negative definite (e.g., because $A_{1,1} \ge 0$), so $s_1 \ge 0$.
There are eigenvectors $u_1, \dotsc, u_n \in \R^n$ such that $|u_i| = 1$, $u_i^T u_j = 0$ for $i \neq j$, and $A u_i = \lambda_i u_i$.
Let $p_i = n^{-1}$, $z_i = \sqrt{n} ((u_1)_i, (u_2)_i) \in \R^2$ (we think of $z_i$ as a column vector).
Then $\sum_i p_i = 1$, $\sum_i p_i z_i z_i^T = (u_i^T u_j)_{i,j \in [2]} = I_2$ and
\[
\Delta = \sum_{i,j} A_{i,j} p_i p_j z_i z_j^T = n^{-1} (u_a^T A u_b)_{a,b \in [2]} = \diag(s_1, s_2),
\]
so $s_1 \le 1$, $s_2 \le \frac{1}{2}$ and the quadratic inequality for $s_1, s_2$ (assuming $s_2 \ge 0$) follows from Proposition \ref{propA12}.
If $n \ge 3$ and $s_3 \ge 0$, then we can take $p_i = n^{-1}$, $z_i = \sqrt{n} ((u_1)_i, (u_2)_i, (u_3)_i) \in \R^3$.
Then $\Delta = \sum_{i,j} A_{i,j} p_i p_j z_i z_j^T = \diag(s_1, s_2, s_3)$, so we get the quadratic inequality for $s_2, s_3$ using Proposition \ref{propA23}.
\end{proof}

\begin{proof}[Proof of Proposition \ref{propB} $\Rightarrow$ Theorem \ref{thmB}]
We use the same method as in the proof of Theorem \ref{thmA}.
There are eigenvectors $u,v \in \R^n$ such that $|u| = |v| = 1$, $u^T v = 0$, $B u = n \alpha u$ and $B v = n \beta v$.
Let $p_i = n^{-1}$, $z_i = \sqrt{n} (u_i, v_i) \in \R^2$ (we think of $z_i$ as a column vector).
Then $\sum_i p_i = 1$, $\sum_i p_i z_i z_i^T = I_2$ and
\[
\Delta = \sum_{i,j} B_{i,j} p_i p_j z_i z_j^T = n^{-1} \begin{pmatrix} u^T B u & u^T B v \\
v^T B u & v^T B v \end{pmatrix} = \begin{pmatrix} \alpha & 0 \\
0 & \beta \end{pmatrix},
\]
so the theorem follows from Proposition \ref{propB}.
\end{proof}

\begin{proof}[Proof of Propositions \ref{propA12}, \ref{propA23}, \ref{propB} $\Rightarrow$ Corollary \ref{corM}]
There are orthogonal, norm $1$ eigenvectors $v_1, v_2 \in \R^n$ of $M$ for the eigenvalues $s_1$, $s_2$.
So $v_1^T v_2 = 0$, $|v_1| = |v_2| = 1$, $M v_1 = s_1 v_1$, $M v_2 = s_2 v_2$.
For $i \in [n]$ let $z_i = (\frac{(v_1)_i}{\sqrt{p_i}}, \frac{(v_2)_i}{\sqrt{p_i}}) \in \R^2$ (column vector) if $p_i > 0$, otherwise let $z_i = 0$.
Then
\[
\sum_i p_i z_i z_i^T = \sum_{i: \, p_i > 0} \begin{pmatrix} (v_1)_i^2 & (v_1)_i (v_2)_i \\ (v_1)_i (v_2)_i & (v_2)_i^2 \end{pmatrix} \preceq \sum_i \begin{pmatrix} (v_1)_i^2 & (v_1)_i (v_2)_i \\ (v_1)_i (v_2)_i & (v_2)_i^2 \end{pmatrix} = I_2,
\]
and
\[
\Delta = \sum_{i,j} A_{i,j} p_i p_j z_i z_j^T = (\sum_{i,j} M_{i,j} (v_a)_i (v_b)_j)_{a,b \in [2]} = (v_a^T M v_b)_{a,b \in [2]} = \diag(s_1, s_2),
\]
so the $(t-s_1) (t-s_2) \mid \chi_M(t)$ case follows from Proposition \ref{propA12}.
In the $(t-s_1) (t-s_2) (t-s_3) \mid \chi_M(t)$ case the same proof works with $z_i = (\frac{(v_1)_i}{\sqrt{p_i}}, \frac{(v_2)_i}{\sqrt{p_i}}, \frac{(v_3)_i}{\sqrt{p_i}}) \in \R^3$ for $p_i > 0$, $z_i = 0$ otherwise, using Proposition \ref{propA23}.
Finally, if $A \in [-1,1]^{n \times n}$ and $(t-\alpha) (t-\beta) \mid \chi_M(t)$, then again the same proof works, with $M v_1 = \alpha v_1$, $M v_2 = \beta v_2$ and $z_i = (\frac{(v_1)_i}{\sqrt{p_i}}, \frac{(v_2)_i}{\sqrt{p_i}}) \in \R^2$ for $p_i > 0$, $z_i = 0$ otherwise, using Proposition \ref{propB}.
\end{proof}

\begin{proof}[Proof of Proposition \ref{propA23} $\Rightarrow$ Proposition \ref{propA12}]
Let $P = \sum_{i=1}^n p_i z_i z_i^T \in \R^{2 \times 2}$ with $P \preceq I_2$.
Let $\Delta = \sum_{i,j \in [n]} A_{i,j} p_i p_j z_i z_j^T \in \R^{2 \times 2}$ have eigenvalues $s_1 \ge s_2$.
The claim is trivial for $s_1 \le 0$, so assume that $s_1 > 0$.
Let $\delta \in (0,1)$, $q_i = \delta p_i$ and $x_i = (\delta^{-\frac{1}{2}} (z_i)_1, \delta^{-\frac{1}{2}} (z_i)_2, 0) \in \R^3$ for $i \in [n]$, and let $x_{n+1} = (0,0, (1-\delta)^{-\frac{1}{2}}) \in \R^3$ and $q_{n+1} = 1-\delta$.
(Here we think of $x_i \in \R^3$ as a column vector.)
Then $\sum_{i=1}^{n+1} q_i = 1$ and $\sum_{i=1}^{n+1} q_i x_i x_i^T = (\begin{smallmatrix} P & 0 \\ 0 & 1 \end{smallmatrix}) \preceq I_3$ (here we are using block matrix notation).
Let $\widetilde{A} \in [0,1]^{(n+1) \times (n+1)}$, where $\widetilde{A}_{i,j} = A_{i,j}$ for $i,j \in [n]$, $\widetilde{A}_{i,n+1} = \widetilde{A}_{n+1,i} = 0$ for $i \in [n]$, and $\widetilde{A}_{n+1,n+1} = 1$.
Then $\widetilde{\Delta} = \sum_{i,j \in [n+1]} \widetilde{A}_{i,j} q_i q_j x_i x_j^T = (\begin{smallmatrix} \delta \Delta & 0 \\ 0 & 1-\delta \end{smallmatrix})$ has eigenvalues $\delta s_1, \delta s_2, 1-\delta$.
Taking $\delta = \frac{1}{1+s_1}$ we get $\delta s_1 = 1-\delta$, so then $\widetilde{\Delta}$ has eigenvalues $\widetilde{s}_1 = \widetilde{s}_2 \ge \widetilde{s}_3$, where $\widetilde{s}_1 = \widetilde{s}_2 = \frac{s_1}{1+s_1}$ and $\widetilde{s}_3 = \frac{s_2}{1+s_1}$.
Applying Proposition \ref{propA23} for $\widetilde{A}$, $(q_i)_{i \in [n+1]}$, $(x_i)_{i \in [n+1]}$, we get that $\widetilde{s}_2 \le \frac{1}{2}$, and that if $s_2 \ge 0$, then $2 (\widetilde{s}_2^2 + \widetilde{s}_2 \widetilde{s}_3 + \widetilde{s}_3^2) \le \widetilde{s}_2+\widetilde{s}_3$.
So $s_1 \le 1$, and if $s_2 \ge 0$, then $2 (s_1^2 + s_1 s_2 + s_2^2) \le (s_1+s_2) (1+s_1)$ and thus $s_1^2 + s_1 s_2 + 2 s_2^2 \le s_1 + s_2$.
The bound $s_2 \le \frac{1}{2}$ follows from $s_2 \le s_1$ and $s_1^2 + s_1 s_2 + 2 s_2^2 \le s_1 + s_2$.
\end{proof}

\begin{proof}[Proof of Proposition \ref{propB} $\Rightarrow$ Proposition \ref{propA23}]
Let $P = \sum_i p_i z_i z_i^T \in \R^{3 \times 3}$ and $w = \sum_i p_i z_i \in \R^3$.
Let us choose $u, v \in \R^3$ such that $|u| = |v| = 1$ and $u^T v = u^T w = v^T w = 0$, and let $W = \R u + \R v$ (so if $w \neq 0$, then $W = w^{\perp}$).
Let $x_i = (u^T z_i, v^T z_i) \in \R^2$ (so we take in the $(u,v)$ coordinate system the orthogonal projection of $z_i$ onto $W$).
Then $\sum_i p_i x_i = 0$, and
\begin{align*}
\sum_i p_i x_i x_i^T &= \begin{pmatrix} \sum_i p_i u^T z_i z_i^T u  & \sum_i p_i u^T z_i z_i^T v \\
\sum_i p_i u^T z_i z_i^T v & \sum_i p_i v^T z_i z_i^T v \end{pmatrix} = \begin{pmatrix} u^T P u  & u^T P v \\
u^T P v & v^T P v \end{pmatrix} \\
&= U^T P U \preceq U^T U = I_2,
\end{align*}
where $U \in \R^{3 \times 2}$ is the matrix with columns $u$ and $v$.

Let $B \in [-1,1]^{n \times n}$, $B_{i,j} = 2 A_{i,j}-1$.
Let
\begin{align*}
\Delta' &= \sum_{i,j} B_{i,j} p_i p_j x_i x_j^T = 2 \sum_{i,j} A_{i,j} p_i p_j x_i x_j^T - (\sum_i p_i x_i) (\sum_j p_j x_j)^T \\
&= 2 \sum_{i,j} A_{i,j} p_i p_j \begin{pmatrix} u^T z_i z_j^T u &  u^T z_i z_j^T v \\
v^T z_i z_j^T u & v^T z_i z_j^T v \end{pmatrix} = 2 \begin{pmatrix} u^T \Delta u &  u^T \Delta v \\
v^T \Delta u & v^T \Delta v \end{pmatrix} \in \R^{2 \times 2}.
\end{align*}
Thinking of $\Delta$ as a quadratic form on $\R^3$, here we get $2$ times its restriction to the plane $W$.
Let $\alpha \ge \beta$ be the eigenvalues of $\Delta'$.
Using Cauchy's interlacing theorem, we get $s_2 \le \frac{\alpha}{2}$ and $s_3 \le \frac{\beta}{2}$.
Applying Proposition \ref{propB} for $(p_i, x_i)_{i \in [n]}$ and $B$, we get that $\alpha^2 + \beta^2 \le 1$, and if $\alpha \beta \ge 0$, then $\alpha^2 + \alpha \beta + \beta^2 \le |\alpha+\beta|$.
So $s_2 \le \frac{\alpha}{2} \le \frac{1}{2}$.
Now suppose that $s_3 \ge 0$.
Then $0 \le 2 s_3 \le \beta \le \alpha \le 1$, so $\alpha^2 + \alpha \beta + \beta^2 \le \alpha+\beta$.
The ellipse $\{(x,y) \in \R^2; x^2 + x y + y^2 \le x+y \}$ is convex and contains $(0,0)$, $(1,0)$, $(0,1)$, $(\alpha, \beta)$, so it contains the rectangle $[0,\alpha] \times [0,\beta]$ (because $\alpha, \beta \in [0,1]$), hence it also contains $(2 s_2, 2 s_3)$.
Thus we get $2 (s_2^2 + s_2 s_3 + s_3^2) \le s_2+s_3$.
\end{proof}

\begin{proof}[Proof of Lemma \ref{lem:ellipse}]
Let $\eta = \alpha^2 + \alpha \beta + \beta^2 - \alpha - \beta$.
Let $x = \frac{3}{2} (\alpha+\beta - \frac{2}{3})$, $y = \frac{\sqrt{3}}{2} (\alpha - \beta)$.
Then $\eta = \frac{1}{3} (x^2+y^2-1)$ and $c_1 \alpha + c_2 \beta = \frac{c_1+c_2}{3} x + \frac{c_1-c_2}{\sqrt{3}} y + \frac{c_1+c_2}{3}$.
Suppose that $\eta \le 0$, and let $c_1, c_2 \ge 0$.
Then $x^2+y^2 \le 1$, so by the Cauchy-Schwarz inequality,
\[
c_1 \alpha + c_2 \beta \le \sqrt{\left(\frac{c_1+c_2}{3}\right)^2 + \left(\frac{c_1-c_2}{\sqrt{3}}\right)^2} + \frac{c_1+c_2}{3} = K_{c_1, c_2}.
\]
Conversely, suppose that $c_1 \alpha + c_2 \beta \le K_{c_1, c_2}$ for every $c_1, c_2 \ge 0$.
If $\alpha+\beta \le 1$, then $\alpha^2 + \alpha \beta + \beta^2 \le (\alpha+\beta)^2 \le \alpha + \beta$.
Now let $\alpha+\beta > 1$.
Then $c_1 = 2\alpha+\beta-1 > 0$ and $c_2 = \alpha+2\beta-1 > 0$, so
\[
2 (3 \eta + 1) = 3(c_1 \alpha + c_2 \beta) - c_1 - c_2 \le 2 \sqrt{c_1^2 - c_1 c_2 + c_2^2} = 2 \sqrt{3 \eta + 1},
\]
thus $\eta \le 0$.
\end{proof}

\begin{proof}[Proof of Proposition \ref{propB2} $\Rightarrow$ Proposition \ref{propB}]
Let $P = \sum_i p_i z_i z_i^T$.
Using the Cauchy-Schwarz inequality we get
\begin{align*}
\alpha^2+\beta^2 &= \Trace(\Delta^2) = \sum_{i,j} B_{i,j} p_i p_j z_i^T \Delta z_j \le \sum_{i,j} p_i p_j |z_i^T \Delta z_j| \\
&\le \left(\sum_{i,j} p_i p_j (z_i^T \Delta z_j)^2\right)^{\frac{1}{2}} = \left(\sum_{i,j} p_i p_j z_i^T \Delta z_j z_j^T \Delta z_i \right)^{\frac{1}{2}} \\
&= \sqrt{\Trace((P \Delta)^2)} \le \sqrt{\Trace(\Delta^2)} = \sqrt{\alpha^2+\beta^2},
\end{align*}
thus $\alpha^2+\beta^2 \le 1$.
Here we have used that for $0 \preceq P \preceq I_2$ and for a symmetric matrix $\Delta$ we always have $\Trace((P \Delta)^2) \le \Trace(\Delta^2)$.
(Proof: We may assume that $P$ is diagonal, because we can replace $(P, \Delta)$ with $(O P O^T, O \Delta O^T)$, where $O$ is an orthogonal matrix.
So let $P$ be diagonal, and let $Q = \sqrt{P}$.
Then $\Trace((P \Delta)^2) = \Trace((Q \Delta Q)^2) = \sum_{i,j} ((Q \Delta Q)_{i,j})^2 = \sum_{i,j} (Q_{i,i} Q_{j,j} \Delta_{i,j})^2 \le \sum_{i,j} \Delta_{i,j}^2 = \Trace(\Delta^2)$.)

Now let $\alpha \beta \ge 0$.
Taking $-B$ instead of $B$ negates $\alpha$ and $\beta$, so we may assume that $\alpha, \beta \ge 0$.
Let $c_1, c_2 \ge 0$, $C = \diag(c_1, c_2)$.
There is an orthogonal matrix $O \in \R^{2 \times 2}$ such that $O \Delta O^T = \diag(\alpha, \beta)$.
Applying Proposition \ref{propB2} for $(p_i)_{i \in [n]}$, $(O z_i)_{i \in [n]}$, $C$, we get
\begin{align*}
c_1 \alpha + c_2 \beta &= \Trace(C O \Delta O^T) = \sum_{i,j} p_i p_j B_{i,j} z_i^T O^T C O z_j \\
& \le \sum_{i,j} p_i p_j |z_i^T O^T C O z_j| \le K_{c_1, c_2}.
\end{align*}
By Lemma \ref{lem:ellipse} it follows that $\alpha^2 + \alpha \beta + \beta^2 \le \alpha + \beta$.
\end{proof}

\section{Proof of Proposition \ref{propB2}}

By continuity, it is enough to prove the claim for $c_1, c_2 > 0$ with $c_1 \neq c_2$.
So let us fix $c_1, c_2 > 0$, $c_1 \neq c_2$, $C = \diag(c_1, c_2)$.
For $p \in \R^n$, $z = (z_i)_{i \in [n]}$ with $z_i \in \R^2$, let $S(p,z) := \sum_{i,j} p_i p_j |z_i^T C z_j|$.
Let $\mcalM'_n$ be the set of $(p,z)$ with $p \in \R_{\ge 0}^n$, $z = (z_i)_{i \in [n]}$, $z_i \in \R^2$, $\sum_i p_i = 1$, $\sum_i p_i z_i z_i^T \preceq I_2$.
Let $\mcalM_n$ be the set of $(p,z) \in \mcalM'_n$ with $\sum_i p_i z_i z_i^T = I_2$.
Let $\sigma_n := \sup_{(p,z) \in \mcalM'_n} S(p,z)$ and $\sigma = \sup_{n \ge 1} \sigma_n$.
Clearly $\sigma_1 \le \sigma_2 \le \sigma_3 \le \dotsc$
We need to prove $\sigma \le K_{c_1, c_2}$.
We assume indirectly that $\sigma > K_{c_1, c_2}$.

An overview of the proof: First we prove that $\sigma_n = \sigma_{n-1}$ for every big enough $n$.
The idea is that if there are many $\pm z_i$ pointing towards nearly the same direction, then we can use ``$p$-variation'': vary the $p_i$'s so that some $p_i$ becomes $0$, while $S(p,z)$ does not decrease.
Next we prove using a compactness argument that there is in fact a $(p,z) \in \mcalM_n$ with some $n \ge 1$ such that $S(p,z) = \sigma$.
We will take $n$ to be minimal.
Once we have a $(p,z) \in \mcalM_n$ with $S(p,z) = \sigma$, we can look at the partial derivatives with respect to $p$, $z$, and use Lagrange multipliers to get a lot of information about $p$, $z$.
This information, together with the $p$-variation and the minimality of $n$, will allow us to prove $n \le 5$, with further restrictions on $p$, $z$.
Finally, we deal with the remaining special cases directly.

Let us call $(J',J'')$ a special pair for $(p,z)$, if $J'$, $J''$ are disjoint subsets of $[n]$, such that $z_i^T C z_j \ge 0$ if $i,j \in J'$ or $i,j \in J''$, and $z_i^T C z_j \le 0$ if $i \in J'$, $j \in J''$.
The following lemma shows that if we have a special pair satisfying a certain condition, then $n$ can be decreased without decreasing $S(p,z)$.

\begin{lemma}[$p$-variation] \label{lem:pvariation}
Let $(p,z) \in \mcalM'_n$.
Let $(J', J'')$ be a special pair for $(p,z)$.
Suppose that there is a $q \in \R^n \setminus \{0\}$ such that $\sum_i q_i = 0$, $\sum_i q_i z_i z_i^T = 0$, and $q_i = 0$ for every $i \in [n] \setminus (J' \cup J'')$.
Then there is a $t \in \R$ and a $k \in [n]$ such that $p+tq \in \R_{\ge 0}^n$, $(p+tq)_k = 0$, and $S(p+tq,z) \ge S(p,z)$.
So there is a $(p',z') \in \mcalM'_{n-1}$ such that $S(p',z') \ge S(p,z)$.
\end{lemma}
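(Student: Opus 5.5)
The plan is to show that along the line $t \mapsto p+tq$ the function $S(p+tq,z)$ is a convex quadratic in $t$, at least on the set where $p+tq \ge 0$. Then moving in the appropriate direction until a coordinate hits zero cannot decrease $S$. Since $q$ is supported on $J' \cup J''$, we expand
\[
S(p+tq,z) = \sum_{i,j} (p_i+tq_i)(p_j+tq_j)\,|z_i^T C z_j| .
\]
This expression is an exact quadratic polynomial in $t$, because the absolute values $|z_i^T C z_j|$ do not depend on $t$. Its $t^2$ coefficient is
\[
Q(q) := \sum_{i,j \in J' \cup J''} q_i q_j |z_i^T C z_j|.
\]
By the special pair property, $|z_i^T C z_j| = \epsilon_i \epsilon_j\, z_i^T C z_j$, where $\epsilon_i = 1$ for $i \in J'$ and $\epsilon_i = -1$ for $i \in J''$. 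Hence
\[
Q(q) = \sum_{i,j} \epsilon_i q_i \epsilon_j q_j\, z_i^T C z_j = y^T C y, \qquad y = \sum_i \epsilon_i q_i z_i \in \R^2 .
\]
Since $C \succeq 0$ (indeed $c_1, c_2 > 0$), we get $Q(q) \ge 0$, so $f(t) = S(p+tq,z)$ is a convex quadratic, possibly degenerate.

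Next we pick the direction. Because $q \ne 0$ and $\sum_i q_i = 0$, the vector $q$ has both strictly positive and strictly negative entries. Therefore the set $\{t : p+tq \ge 0\}$ is a compact interval $[t_-, t_+]$ containing $0$, and at each endpoint some coordinate $(p+tq)_k$ becomes $0$. A convex function on an interval attains its maximum at an endpoint, so $\max(f(t_-), f(t_+)) \ge f(0) = S(p,z)$. We choose $t$ to be the better endpoint and $k$ the coordinate that vanishes there.

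The conditions $\sum_i q_i = 0$ and $\sum_i q_i z_i z_i^T = 0$ ensure that $\sum_i (p+tq)_i = 1$ and $\sum_i (p+tq)_i z_i z_i^T = \sum_i p_i z_i z_i^T \preceq I_2$. Hence $(p+tq,z) \in \mcalM'_n$. Deleting index $k$, which has weight $0$, leaves the sums and $S$ unchanged, because every term involving $k$ carries a factor $(p+tq)_k = 0$. This yields $(p',z') \in \mcalM'_{n-1}$ with $S(p',z') \ge S(p,z)$.

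The only delicate point is the sign identity $|z_i^T C z_j| = \epsilon_i\epsilon_j z_i^T C z_j$, which is exactly what the special pair definition provides, including the diagonal terms $i = j$. After that, everything is routine.
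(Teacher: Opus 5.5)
Your proof is correct and follows the same route as the paper's proof: you expand $S(p+tq,z)$ as a quadratic in $t$, use the special pair signs to write the $t^2$ coefficient as $y^T C y \ge 0$, and then move to whichever endpoint of the feasible interval $\{t : p+tq \ge 0\}$ does not decrease $S$. You also check explicitly that $\sum_i q_i = 0$ with $q \neq 0$ forces entries of both signs, so the feasible interval is bounded on both sides; the paper's proof leaves this point implicit.
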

\begin{proof}
Let $J = J' \cup J''$.
Then
\[
S(p+tq,z) = \sum_{i,j} (p_i + t q_i) (p_j + t q_j) |z_i^T C z_j| = S(p,z) + \beta t + \alpha t^2
\]
for $t \in \R$, with $\beta = 2 \sum_i q_i \sum_j p_j |z_i^T C z_j|$ and $\alpha = \sum_{i,j \in J} q_i q_j |z_i^T C z_j| = x^T C x \ge 0$, where $x = \sum_{i \in J'} q_i z_i - \sum_{i \in J''} q_i z_i$.
So this is a convex function of $t$.
Since $q \neq 0$, the set of $t \in \R$ such that $p+tq \in \R_{\ge 0}^n$, is a closed interval $[t_0, t_1]$, with $t_0 \le 0 \le t_1$.
So using the convexity we get that $S(p + t_c q, z) \ge S(p,z)$ for some $c \in \{0,1\}$, and here $(p + t_c q)_k = 0$ for some $k \in [n]$.
Then $(p + t_c q, z) \in \mcalM'_n$, and deleting $(p + t_c q)_k$ and $z_k$, we get $(p',z') \in \mcalM'_{n-1}$ with $S(p',z') = S(p + t_c q, z) \ge S(p,z)$.
\end{proof}

If $n > 16$, then there is a closed quadrant of the plane containing at least $5$ of the points $\sqrt{C} z_i$.
Let $J \subseteq [n]$ be the corresponding set of indices $i$.
Then $|J| \ge 5$, and $z_i^T C z_j \ge 0$ for every $i,j \in J$.
Then $\sum_{i \in J} q_i = 0$ and $\sum_{i \in J} q_i z_i z_i^T = 0$ give us $4$ equations for $(q_i)_{i \in J}$ (note that $z_i z_i^T \in \R^{2 \times 2}$ are symmetric matrices), so there is a nonzero solution.
So using the lemma, we get $\sigma_n = \sigma_{n-1}$ for $n > 16$.
So $\sigma_n = \sigma$ for $n \ge 16$.

Next we show that the supremum $\sigma_n$ is achieved.
To see this, we need a change of variables to get compactness.
Fix $n \ge 1$.
Let $y_i = \sqrt{p_i} z_i \in \R^2$ and $S'(p,y) := \sum_{i,j} \sqrt{p_i p_j} |y_i^T C y_j|$.
Then $\sum_i p_i = 1$, $\sum_i y_i y_i^T \preceq I_2$ and $S(p,z) = S'(p,y)$.
Here $y_i y_i^T \preceq I_2$, so $|y_i| \le 1$.
So by compactness and continuity, there is a $(p, y)$ with $p \in \R_{\ge 0}^n$, $y = (y_i)_{i \in [n]}$, $y_i \in \R^2$, $\sum_i p_i = 1$, $\sum_i y_i y_i^T \preceq I_2$ and maximal $S'(p,y)$.
Let $z_i = p_i^{-\frac{1}{2}} y_i$ if $p_i > 0$, otherwise let $z_i = 0$.
Then $(p,z) \in \mcalM'_n$ and $S(p,z) = S'(p,y) = \sigma_n$.

So there is an $n \in \{1, \dotsc, 16\}$ and a $(p,z) \in \mcalM'_n$ such that $S(p,z) = \sigma$.
We choose $n$ to be minimal.
If $p_i = 0$ for some $i$, then we could delete $p_i$ and $z_i$, contradicting the minimality of $n$.
So $p_i > 0$ for every $i$.
We will show now that $(p,z) \in \mcalM_n$.
Suppose indirectly that $\sum_i p_i z_i z_i^T \neq I_2$.
Then there is a $v \in \R^2 \setminus \{0\}$ such that $\sum_i p_i z_i z_i^T \preceq I_2 - v v^T$.
Let $\epsilon \in (0,1)$, $p' \in \R_{\ge 0}^{n+1}$, $p'_i = (1-\epsilon) p_i$ and $z'_i = (1-\epsilon)^{-\frac{1}{2}} z_i$ for $i \in [n]$, $p'_{n+1} = \epsilon$, $z'_{n+1} = \epsilon^{-\frac{1}{2}} v$.
Then $\sum_{i=1}^{n+1} p'_i = 1$, $\sum_{i=1}^{n+1} p'_i z'_i (z'_i)^T = (\sum_{i=1}^n p_i z_i z_i^T) + v v^T \preceq I_2$ and
\begin{align*}
S(p', z') &= (\sum_{i,j \in [n]} p'_i p'_j |(z'_i)^T C z'_j|) + 2 p'_{n+1} (\sum_{i \in [n]} p'_i |(z'_{n+1})^T C z'_i|) \\
&+ (p'_{n+1})^2 |(z'_{n+1})^T C z'_{n+1}| \\
&= (1-\epsilon) S(p,z) + 2 \sqrt{1-\epsilon} \sqrt{\epsilon} (\sum_{i \in [n]} p_i |v^T C z_i|) + \epsilon v^T C v \\
&= \sigma + 2 \sqrt{\epsilon} (\sum_{i \in [n]} p_i |v^T C z_i|) + O(\epsilon)
\end{align*}
at $\epsilon \to 0$.
Here $(p',z') \in \mcalM'_{n+1}$, so $S(p',z') \le \sigma$, therefore $\sum_{i \in [n]} p_i |v^T C z_i| \le 0$.
So $(C v)^T z_i = v^T C z_i = 0$ for every $i$.
Here $C v \neq 0$, so there are $u \in \R^2$, $a_1, \dotsc, a_n \in \R$ such that $|u| = 1$ and $z_i = a_i u$ for every $i$.
Then $\sum_i p_i = 1$, $(\sum_i p_i a_i^2) u u^T \preceq I_2$ and $\sigma = (\sum_i p_i |a_i|)^2 (u^T C u)$.
Then $\sum_i p_i a_i^2 \le 1$, so using the Cauchy-Schwarz inequality, we get $(\sum_i p_i |a_i|)^2 \le \sum_i p_i a_i^2 \le 1$.
So $K_{c_1, c_2} < \sigma \le u^T C u \le \max(c_1, c_2)$.
However it is easy to check that $K_{c_1, c_2} > \max(c_1, c_2)$ for $c_1, c_2>0$.
This contradiction proves that indeed $(p,z) \in \mcalM_n$.

The next step of the proof is to use the method of Lagrange multipliers (a standard tool in constrained optimization) to get further information on the maximizing $(p,z)$.

\begin{lemma}
Suppose that $\sigma > K_{c_1, c_2}$.
Let $n \ge 1$, $(p,z) \in \mcalM_n$ such that $S(p,z) = \sigma$ and $n$ is minimal.
Let $B \in \{-1,1\}^{n \times n}$ be a symmetric matrix such that $B_{i,j} z_i^T C z_j \ge 0$ for every $i,j$.
Let $Z_i = \sum_{j \in [n]} B_{i,j} p_j z_j \in \R^2$ for $i \in [n]$, and let $\Delta = \sum_{i,j} B_{i,j} p_i p_j z_i z_j^T \in \R^{2 \times 2}$.
Then $n \ge 3$, $\Delta$ is diagonal and positive definite, $\Trace(C \Delta) = \sigma$, and $Z_i = \Delta z_i$, $z_i^T C \Delta z_i = \sigma$ for every $i$.
Moreover $p_i > 0$, $z_i \neq 0$ and $B_{i,i} = 1$ for every $i$, and $z_i \neq \pm z_j$ and $Z_i \neq \pm Z_j$ for $i \neq j$.
\end{lemma}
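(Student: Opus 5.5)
The plan is to write down first-order optimality conditions at the maximizer $(p,z)\in\mcalM_n$ and combine them with the $p$-variation lemma and the minimality of $n$.

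\textbf{Reduction to a smooth problem.} For the fixed sign matrix $B$, set $S_B(p,z)=\sum_{i,j}B_{i,j}p_ip_j z_i^TCz_j=\Trace(C\Delta)$. Then $S_B\le S$ everywhere and $S_B(p,z)=S(p,z)=\sigma$. Hence $(p,z)$ maximizes the smooth function $S_B$ on the constraint set $\sum_ip_i=1$, $\sum_ip_iz_iz_i^T=I_2$. All $p_i>0$, so $p$ is interior and only equality constraints matter. This gives $\Trace(C\Delta)=\sigma$.

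\textbf{Lagrange conditions.} Introduce a scalar multiplier $\mu$ and a symmetric $2\times2$ multiplier $\Lambda$.
\begin{itemize}
\item Differentiating in $z_i$ gives $2p_iCZ_i=2p_i\Lambda z_i$, so $CZ_i=\Lambda z_i$.
\item Differentiating in $p_i$ gives $2z_i^TCZ_i=\mu+z_i^T\Lambda z_i$.
\end{itemize}
Multiplying the first condition by $p_iz_j^T$ and summing gives $C\Delta=\Lambda$, since $\sum_ip_iz_iz_i^T=I$. So $C\Delta$ is symmetric. As $C=\diag(c_1,c_2)$ with $c_1\ne c_2$ and $\Delta$ symmetric, $C\Delta=\Delta C$ forces $\Delta_{12}(c_1-c_2)=0$, so $\Delta$ is diagonal. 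Then $Z_i=C^{-1}\Lambda z_i=\Delta z_i$. The $p$-condition now reads $z_i^TC\Delta z_i=\mu$ for all $i$. Averaging with weights $p_i$ gives $\mu=\Trace(C\Delta)=\sigma$.

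\textbf{Properties of $\Delta$, the $z_i$, and $B$.}
\begin{itemize}
\item Since $z_i^TC\Delta z_i=\sigma>0$, each $z_i\ne0$.
\item For positive definiteness of $\Delta$: if a diagonal entry $d_k\le0$, then $\sigma=c_1d_1z_{i1}^2+c_2d_2z_{i2}^2$ bounds $\sigma$ by $c_{k'}d_{k'}$ times the $k'$-coordinate contribution. Using $\sum p_iz_{ik'}^2=1$ we get $\sigma\le c_{k'}d_{k'}$. The $\alpha^2+\beta^2\le1$ argument from the proof of Proposition \ref{propB2} $\Rightarrow$ Proposition \ref{propB} (which only used Cauchy--Schwarz) gives $d_{k'}\le1$. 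So $\sigma\le\max(c_1,c_2)<K_{c_1,c_2}$, a contradiction.
\item $B_{i,i}=1$ because $z_i^TCz_i>0$ and $B_{i,i}z_i^TCz_i\ge0$.
\end{itemize}

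\textbf{Distinctness.} If $z_i=\pm z_j$ for $i\ne j$, merging them is the key issue. Say $z_j=\varepsilon z_i$ with $\varepsilon=\pm1$. Then the two indices give equal contributions $p_iz_iz_i^T$ to the constraint. Replacing them with a single point $z_i$ of weight $p_i+p_j$ keeps $(p,z)\in\mcalM'_{n-1}$. Because $|z_k^TCz_j|=|z_k^TCz_i|$, the value of $S$ is unchanged. This contradicts minimality.

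Alternatively, one can apply Lemma \ref{lem:pvariation} with $q=e_i-e_j$ and special pair $(\{i\},\{j\})$ or $(\{i,j\},\emptyset)$, which is special since $z_i^TCz_i>0$. If $Z_i=\pm Z_j$, then $\Delta z_i=\pm\Delta z_j$, and invertibility of $\Delta$ gives $z_i=\pm z_j$.

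\textbf{$n\ge3$.} The set $\{z_iz_i^T\}$ must span enough to make $\sum p_iz_iz_i^T=I_2$, so $n\ge2$. For $n=2$, the vectors $z_1,z_2$ must be orthogonal with $p_i|z_i|^2=1$. Then $S$ is computed directly: $S=\sum_ip_i^2z_i^TCz_i+2p_1p_2|z_1^TCz_2|$. I expect this to be at most $\max(c_1,c_2)$, or at any rate at most $K$, by a short calculation, contradicting $\sigma>K$. The case $n=1$ is already excluded.

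\textbf{Main obstacle.} I expect the hardest step to be rigorously justifying the Lagrange conditions, in particular the constraint qualification. The gradients of $\sum p_iz_iz_i^T$ with respect to the $z_i$ must span the space of symmetric matrices. Because $z_i\ne0$ and the map $z\mapsto\sum p_i(z_i\delta_i^T+\delta_iz_i^T)$ is onto $\Sym_2$ once the $z_i$ span $\R^2$ (which they do), this holds. The case $n=2$ needs an explicit computation.
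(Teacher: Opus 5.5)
Your route is the paper's: Lagrange multipliers for $S_B=\Trace(C\Delta)$ at the interior point, $C\Delta=\Lambda$ symmetric forcing $\Delta$ diagonal (as $c_1\neq c_2$), $Z_i=\Delta z_i$ and $z_i^TC\Delta z_i=\mu=\sigma$, positivity of $\Delta$ from $\alpha^2+\beta^2\le 1$ and $K_{c_1,c_2}>\max(c_1,c_2)$, and merging $z_j=\pm z_i$ for distinctness. All of this is fine.

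\textbf{The gap: excluding $n=2$.} This is the one step you did not prove. Saying you ``expect'' the bound $\max(c_1,c_2)$ from a short calculation is not an argument. The step closes in one line from facts you already established, and this is how the paper does it. Since $B_{1,1}=B_{2,2}=1$ and $B_{1,2}^2=1$,
\[
Z_2=B_{1,2}p_1z_1+p_2z_2=B_{1,2}\bigl(p_1z_1+B_{1,2}p_2z_2\bigr)=B_{1,2}Z_1,
\]
which contradicts $Z_1\neq\pm Z_2$. Your expected bound is also true. Put $u_i=\sqrt{p_i}z_i$, which are orthonormal. Then $S=v^TNv$, where $v=(\sqrt{p_1},\sqrt{p_2})$ is a unit vector and $N=(|u_i^TCu_j|)_{i,j}$. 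The matrix $N$ is conjugate to $U^TCU$ by $\diag(1,\pm1)$, so $S\le\max(c_1,c_2)<K_{c_1,c_2}$.

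\textbf{Smaller points.}
\begin{itemize}
\item $p_i>0$ needs its one-line justification from minimality: a zero-weight point can be deleted.
\item The constraint qualification must also cover the scalar constraint $\sum_ip_i=1$. The $z$-directions give $\{0\}\times\Sym_2$: take $\delta_i=\frac12 Gz_i$ and use $\sum_ip_iz_iz_i^T=I_2$. The direction $\partial_{p_1}$ gives $(1,z_1z_1^T)$. Together these make the differential onto $\R\times\Sym_2$.
\item That argument uses only that the $z_i$ span $\R^2$. Do not invoke $z_i\neq0$ there, since you only derive it afterwards from the Lagrange equations, so using it would be circular.
\item ``Multiplying by $p_iz_j^T$'' should read $p_iz_i^T$.
\end{itemize}
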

\begin{proof}
If $p_i = 0$ for some $i$, then we can delete $(p_i, z_i)$, contradicting the minimality of $n$.
So $p_i > 0$ for every $i$.
So $(p,z)$ maximizes $\sum_{i,j} p_i p_j B_{i,j} z_i^T C z_j$ under the constraints $\sum_i p_i = 1$, $\sum_i p_i z_i z_i^T = I_2$ with $p_i \in \R_{>0}$, $z_i \in \R^2$, and the maximum value is $\sigma$.
So our objective function is
\[
f \colon \R^n \times (\R^2)^n \to \R, \quad (p,z) \mapsto \sum_{i,j} p_i p_j B_{i,j} z_i^T C z_j,
\]
and our constraint function is
\[
g \colon \R^n \times (\R^2)^n \to \R \times \Sym_2, \quad (p,z) \mapsto (1 - (\sum_i p_i), I_2 - (\sum_i p_i z_i z_i^T)),
\]
where $\Sym_2$ denotes the vector space of symmetric matrices in $\R^{2 \times 2}$.
Note that here $\R \times \Sym_2$ is a $4$ dimensional vector space.
We know that $(p,z)$ is a local maximum point of $f$ under the constraint $g(p,z) = 0$.
In order for the method of Lagrangian multipliers to be applicable, we need to check the constraint gradient requirement: the differential of $g$ at $(p,z)$ has to have rank $4$.
Suppose indirectly that this rank is not $4$.
Then there is a $(c, G) \in (\R \times \Sym_2) \setminus \{(0,0)\}$ such that
\[
\nu(p,z) = c \sum_i p_i + \Trace(G \sum_i p_i z_i z_i^T) = c \sum_i p_i + \sum_i p_i z_i^T G z_i
\]
has zero differential at $(p,z)$.
Then $(\partial_{(z_i)_1} \nu)(p,z) = (\partial_{(z_i)_2} \nu)(p,z) = 0$, so we get (using $p_i \neq 0$) that $G z_i = 0$ for every $i$.
So $G = G (\sum_i p_i z_i z_i^T) = 0$, hence $c \neq 0$.
Then $0 = (\partial_{p_1} \nu)(p,z) = c \neq 0$.
This contradiction shows that the constraint gradient requirement is satisfied.

Our Lagrangian function is
\begin{align*}
&\mcalL(p,z,\mu,\lambda) \\
&:= \sum_{i,j} p_i p_j B_{i,j} z_i^T C z_j + \mu (1 - (\sum_i p_i) ) + \sum_{a,b \in [2]} \lambda_{a,b} (I_2 - (\sum_i p_i z_i z_i^T))_{a,b} \\
&= (\sum_{i,j} p_i p_j B_{i,j} z_i^T C z_j) + \mu (1 - (\sum_i p_i)) - (\sum_i p_i z_i^T \lambda z_i) + \Trace(\lambda),
\end{align*}
where $\mu \in \R$, $\lambda = (\lambda_{a,b})_{a,b \in [2]} \in \Sym_2$.
The method of Lagrange multipliers tells us that there is a unique $(\mu, \lambda) \in \R \times \Sym_2$ such that $\frac{\partial \mcalL}{\partial p}$, $\frac{\partial \mcalL}{\partial z}$, $\frac{\partial \mcalL}{\partial \mu}$, $\frac{\partial \mcalL}{\partial \lambda}$ are all zero at $(p,z,\mu,\lambda)$.
The equations $\frac{\partial \mcalL}{\partial \mu} = 0$ and $\frac{\partial \mcalL}{\partial \lambda} = 0$ just give us the constraints $\sum_i p_i = 1$ and $\sum_i p_i z_i z_i^T = I_2$.
From $\frac{\partial \mcalL}{\partial p_k} = 0$ we get $2 z_k^T C Z_k - \mu - z_k^T \lambda z_k = 0$.
From $\frac{\partial \mcalL}{\partial z_k} = 0$ we get $2 p_k C Z_k - 2 p_k \lambda z_k = 0$, so $C Z_k = \lambda z_k$.
So we get $\mu = z_k^T \lambda z_k = z_k^T C Z_k$.
Then
\[
\Trace(\lambda) = \Trace(\lambda \sum_k p_k z_k z_k^T) = \sum_k p_k z_k^T \lambda z_k = \mu
\]
and $\sigma = \sum_i p_i z_i^T C Z_i = \mu$.
Let $D := C^{-1} \lambda \in \R^{2 \times 2}$.
Then $C D = (C D)^T$, $Z_i = D z_i$ and $z_i^T C D z_i = \sigma$ for every $i$, and $\Trace(C D) = \sigma$.
So $D = D (\sum_j p_j z_j z_j^T) = \sum_j p_j Z_j z_j^T = \sum_{i,j} B_{i,j} p_i p_j z_i z_j^T = \Delta$.
So $\Delta = \Delta^T$ and $C \Delta = \Delta C$.
Using $c_1 \neq c_2$ we get that $\Delta$ is diagonal.

Let $\Delta = \diag(\alpha, \beta)$.
As we have seen earlier (in the proof of Proposition \ref{propB2} $\Rightarrow$ Proposition \ref{propB}), we have $\alpha^2 + \beta^2 = \Trace(\Delta^2) \le 1$, so $\alpha, \beta \le 1$.
If $\alpha \le 0$ or $\beta \le 0$, then $K_{c_1,c_2} < \sigma = \Trace(C \Delta) = c_1 \alpha + c_2 \beta \le \max(c_1, c_2) < K_{c_1,c_2}$.
So $\alpha, \beta > 0$, hence $\Delta \succ 0$.
Since $z_i^T C \Delta z_i = \sigma > 0$, we have $z_i \neq 0$ and thus $B_{i,i} = 1$ for every $i$.
If $z_i = \pm z_j$ for some $i \neq j$, then we can replace $(p_i, z_i)$, $(p_j, z_j)$ with $(p_i+p_j, z_i)$ without changing $S(p,z)$, contradicting the minimality of $n$.
So $z_i \neq \pm z_j$ and thus $Z_i \neq \pm Z_j$ for $i \neq j$ (here we have used that $\Delta$ is invertible).
Since $\sum_i p_i z_i z_i^T = I_2$ has rank $2$, we have $n \ge 2$.
If $n=2$, then $Z_2 = B_{1,2} Z_1 = \pm Z_1$, which is impossible.
So $n \ge 3$.
\end{proof}

Let $\Delta = \diag(\alpha, \beta)$, $M = (B_{i,j} \sqrt{p_i p_j})_{i,j \in [n]} \in \R^{n \times n}$, $u_1 = (\sqrt{p_i} (z_i)_1)_{i \in [n]} \in \R^n$, $u_2 = (\sqrt{p_i} (z_i)_2)_{i \in [n]} \in \R^n$.
Then $M^T = M$, and
\[
(M u_1)_i = \sqrt{p_i} \sum_j B_{i,j} p_j (z_j)_1 = \sqrt{p_i} (Z_i)_1 = \sqrt{p_i} (\Delta z_i)_1 = \alpha (u_1)_i
\]
for every $i$, thus $M u_1 = \alpha u_1$.
Similarly, $M u_2 = \beta u_2$.
Here $\sum_i p_i z_i z_i^T = I_2$ means that $|u_1| = |u_2| = 1$ and $u_1^T u_2 = 0$.
So $\alpha$ and $\beta$ are both eigenvalues of $M$ (and if $\alpha = \beta$ then with multiplicity $\ge 2$).
In other words, $(t-\alpha) (t-\beta)$ divides the characteristic polynomial $\chi_M(t) = \det(t I_n - M)$.

Now we know that $\sum_i q_i z_i z_i^T = 0$ implies $\sum_i q_i = 0$, because $\sigma > K_{c_1, c_2}>0$ and $\Trace(C \Delta \sum_i q_i z_i z_i^T) = \sum_i q_i z_i^T C \Delta z_i = \sigma \sum_i q_i$.
Using this observation, Lemma \ref{lem:pvariation}, and the minimality of $n$, we get that $|J'| + |J''| \le 3$ for every special pair $(J', J'')$ for $(p,z)$.

We can negate any $z_i$, and we can also permute the indices of $z_1, \dotsc, z_n$, without changing $\sum_i p_i z_i z_i^T$ and $S(p,z)$.
So we may assume that
\[
\sqrt{C} z_i = r_i (\cos(\pi \theta_i), \sin(\pi \theta_i))
\]
with $r_i > 0$, $0 \le \theta_1 \le \dotsc \le \theta_n < 1$.
In fact here $\theta_1 < \dotsc < \theta_n$, because if $\theta_i = \theta_j$ for some $i \neq j$, then $\R z_i = \R z_j$, and this is impossible, because $z_i^T C \Delta z_i = z_j^T C \Delta z_j = \sigma > 0$ and $z_i \neq \pm z_j$.
Note that $z_i^T C z_j = r_i r_j \cos(\pi (\theta_i - \theta_j))$, so if $|\theta_i - \theta_j| < \frac{1}{2}$, then $B_{i,j} = 1$, and if $|\theta_i - \theta_j| > \frac{1}{2}$, then $B_{i,j} = -1$.

Suppose that $n = 3$.
If $B_{1,3} = 1$, then $\theta_3 - \theta_1 \le \frac{1}{2}$, so $\theta_2-\theta_1, \theta_3 - \theta_2 < \frac{1}{2}$, so $B_{1,2} = B_{2,3} = 1$ and thus $Z_1 = Z_2$, which is impossible.
So $B_{1,3} = -1$.
We have $\theta_3 - \theta_1 < 1$, so $\theta_2 - \theta_1 < \frac{1}{2}$ or $\theta_3 - \theta_2 < \frac{1}{2}$.
So $B_{1,2} = 1$ or $B_{2,3} = 1$.
If $B_{1,2} = -1$, then $B_{2,3} = 1$ and $Z_2 = Z_3$.
So $B_{1,2} = 1$.
If $B_{2,3} = -1$, then $Z_1 = Z_2$.
So $B_{2,3} = 1$.
So 
\[
B = \left(\begin{smallmatrix}
1 & 1 & -1 \\
1 & 1 & 1 \\
-1 & 1 & 1
\end{smallmatrix}\right) \textrm{ and }
M = \left(\begin{smallmatrix}
p_1 & \sqrt{p_1 p_2} & -\sqrt{p_1 p_3} \\
\sqrt{p_1 p_2} & p_2 & \sqrt{p_2 p_3} \\
-\sqrt{p_1 p_3} & \sqrt{p_2 p_3} & p_3
\end{smallmatrix}\right).
\]
Then $\chi_M(t) = t^3 - t^2 + c$, where $c = 4 p_1 p_2 p_3 \in [0, \frac{4}{27}]$.
We know that $(t-\alpha) (t-\beta)$ divides $\chi_M(t)$, so $t^3-t^2+c = (t-\alpha) (t-\beta) (t-\gamma)$ for some $\gamma \in \R$.
Then $\alpha+\beta+\gamma = 1$, $\alpha \beta + \alpha \gamma + \beta \gamma = 0$ and $\alpha \beta \gamma = -c$.
Substituting $\gamma = 1-\alpha-\beta$ into the second equation, we get $\alpha \beta + (\alpha+\beta) (1-\alpha-\beta) = 0$.
Expanding this, we get $\alpha^2 + \alpha \beta + \beta^2 = \alpha + \beta$ (the same calculation appears in Remark \ref{rem:thmB}).
Using Lemma \ref{lem:ellipse} we get $K_{c_1, c_2} < \sigma = \Trace(C \Delta) = c_1 \alpha + c_2 \beta \le K_{c_1, c_2}$.
So $n \ge 4$.

For $i \in [n-3]$ we have $\theta_{i+3} - \theta_i > \frac{1}{2}$, because otherwise $(\{i,i+1,i+2,i+3\}, \varnothing)$ would be a special pair.
Furthermore, for $i \in \{n-2,n-1,n\}$, we have $\theta_i - \theta_{i+3-n} < \frac{1}{2}$, because otherwise $(\{i, \dotsc, n\}, \{1, \dotsc, i+3-n\})$ would be a special pair.
So we get $\theta_4 - \theta_1 > \frac{1}{2} > \theta_{n-2} - \theta_1$, thus $\theta_4 > \theta_{n-2}$, so $n \le 5$.

Suppose that $n = 4$.
Then $\theta_4 - \theta_1 > \frac{1}{2}$, and $\theta_2 - \theta_1, \theta_3 - \theta_2, \theta_4 - \theta_3 < \frac{1}{2}$.
So $B_{1,4} = -1$, $B_{1,2} = B_{2,3} = B_{3,4} = 1$.
If $B_{1,3} = B_{2,4} = 1$, then $Z_2 = Z_3$.
If $B_{1,3} = B_{2,4} = -1$, then $Z_4 = -Z_1$.
If $B_{1,3} = 1$ and $B_{2,4} = -1$, then $Z_1 = Z_2$.
If $B_{1,3} = -1$ and $B_{2,4} = 1$, then $Z_3 = Z_4$.
Since $Z_i \neq \pm Z_j$ for $i \neq j$, we get $n = 5$.

Finally, for $n = 5$ we have $\theta_4-\theta_1, \theta_5-\theta_2 > \frac{1}{2}$ and $\theta_3 - \theta_1, \theta_4 - \theta_2, \theta_5 - \theta_3 < \frac{1}{2}$.
So $B_{1,4} = B_{1,5} = B_{2,5} = -1$, $B_{1,3} = B_{2,4} = B_{3,5} = B_{1,2} = B_{2,3} = B_{3,4} = B_{4,5} = 1$.
So
\[
B = \left(\begin{smallmatrix}
1 & 1 & 1 & -1 & -1 \\
1 & 1 & 1 & 1 & -1 \\
1 & 1 & 1 & 1 & 1 \\
-1 & 1 & 1 & 1 & 1 \\
-1 & -1 & 1 & 1 & 1
\end{smallmatrix}\right) \textrm{ and } 
M = \left(\begin{smallmatrix}
p_1 & \sqrt{p_1 p_2} & \sqrt{p_1 p_3} & -\sqrt{p_1 p_4} & -\sqrt{p_1 p_5} \\
\sqrt{p_1 p_2} & p_2 & \sqrt{p_2 p_3} & \sqrt{p_2 p_4} & -\sqrt{p_2 p_5} \\
\sqrt{p_1 p_3} & \sqrt{p_2 p_3} & p_3 & \sqrt{p_3 p_4} & \sqrt{p_3 p_5} \\
-\sqrt{p_1 p_4} & \sqrt{p_2 p_4} & \sqrt{p_3 p_4} & p_4 & \sqrt{p_4 p_5} \\
-\sqrt{p_1 p_5} & -\sqrt{p_2 p_5} & \sqrt{p_3 p_5} & \sqrt{p_4 p_5} & p_5
\end{smallmatrix}\right).
\]
Then $\chi_M(t) = t^5 - t^4 + 4 b t^2 - 16 a$, where
\[
a = p_1 p_2 p_3 p_4 p_5, \quad b = p_2 p_4 p_1 + p_4 p_1 p_3 + p_1 p_3 p_5 + p_3 p_5 p_2 + p_5 p_2 p_4.
\]
For us the exact value of $a$, $b$ is not important, we only use $a > 0$.
We know that $\chi_M(\alpha) = \chi_M(\beta) = 0$, where $\alpha, \beta > 0$.
Then for $\alpha \neq \beta$ we can solve the linear system of equations $\chi_M(\alpha) = \chi_M(\beta) = 0$ in $a, b$.
Explicitly, we can eliminate $b$ by taking $\alpha^{-2} \chi_M(\alpha) - \beta^{-2} \chi_M(\beta)$.
We get
\[
0 = \frac{\alpha^{-2} \chi_M(\alpha) - \beta^{-2} \chi_M(\beta)}{\alpha-\beta} = 16 \frac{\alpha+\beta}{\alpha^2 \beta^2} a + \alpha^2 + \alpha \beta + \beta^2 - \alpha - \beta.
\]
So $0 < a = -\frac{1}{16} \frac{\alpha^2 \beta^2}{\alpha+\beta} (\alpha^2 + \alpha \beta + \beta^2 - \alpha - \beta)$, hence $\alpha^2 + \alpha \beta + \beta^2 < \alpha + \beta$.
If $\alpha = \beta$, then $\chi_M(\alpha) = \chi_M'(\alpha) = 0$, so $0 = 2 \chi_M(\alpha) - \alpha \chi_M'(\alpha) = (2 - 3 \alpha) \alpha^4 - 32 a$, hence $0 < a = \frac{1}{32} (2 - 3 \alpha) \alpha^4$, thus $\alpha < \frac{2}{3}$ and again $\alpha^2 + \alpha \beta + \beta^2 < \alpha + \beta$.
Using Lemma \ref{lem:ellipse}, we get $K_{c_1, c_2} < \sigma = c_1 \alpha + c_2 \beta \le K_{c_1, c_2}$.
This final contradiction proves that indeed $\sigma \le K_{c_1, c_2}$.

\section*{Acknowledgement}

No AI was used during the preparation of this article, except at the end for a final check.

\bibliographystyle{plainurl}
\bibliography{Quadratic123eig}

\end{document}